\documentclass[11pt,a4paper]{amsart}
\usepackage{amssymb}
\usepackage{latexsym}
\usepackage{exscale}
\usepackage{amsfonts}
\usepackage{graphicx}
\usepackage{mathrsfs}
\usepackage{amsmath,amscd,amsthm}
\usepackage{bbm,pifont}
\usepackage{enumerate}
\usepackage{color}
\usepackage{amssymb}
\usepackage{latexsym}
\usepackage{exscale}

\allowdisplaybreaks



\headheight=8pt

\topmargin=0pt

\textheight=660pt

\textwidth=436pt

\oddsidemargin=10pt

\evensidemargin=10pt

\numberwithin{equation}{section}
\newtheorem{theorem}{Theorem}[section]
\newtheorem{lemma}[theorem]{Lemma}
\newtheorem{corollary}[theorem]{Corollary}
\newtheorem{proposition}[theorem]{Proposition}

\newtheorem{definition}[theorem]{Definition}

\newtheorem{remark}[theorem]{Remark}

\begin{document}
\allowdisplaybreaks

\title[Boundedness of singular integrals on $\mathbb{H}^{n}$]
{Boundedness of singular integrals on the flag Hardy spaces on Heisenberg group}
\author{Guorong HU and Ji Li}

\address{Department of Mathematics, Jiangxi Normal University,
Nanchang, Jiangxi 330022, China}
\email{hugr1984@163.com}

\address{ Department of Mathematics, Macquarie University, NSW, 2109, Australia}
\email{ji.li@mq.edu.au}

\subjclass[2010]{42B30, 43A80, 42B25, 42B20}
\date{\today}
\keywords{Discrete Littlewood--Paley analysis, Heisenberg group, flag Hardy spaces, singular integrals}

\begin{abstract}
We prove that the classical one-parameter convolution singular integrals on the Heisenberg group are bounded on multiparameter flag Hardy spaces, which satisfy `intermediate' dilation between the one-parameter anisotropic dilation and the product dilation on $\mathbb{C}^{n}\times \mathbb{R}$ implicitly.
\end{abstract}

\maketitle


\section{Introduction and statement of main results}
\setcounter{equation}{0}

The purpose of this note is to show that the classical one-parameter convolution singular integrals 
on the Heisenberg group are bounded on multiparameter flag Hardy spaces. Recall that the Heisenberg $\mathbb{H}^{n}$ is
the Lie group with underlying manifold $\mathbb{C}^{n} \times \mathbb{R}= \{[z,t]: z \in \mathbb{C}^{n}, t \in \mathbb{R}\}$
and multiplication law
\begin{equation*}
[z,t]\circ [z', t']=[z_{1},\cdots,z_{n}, t] \circ [z_{1}',\cdots,z_{n}', t']: =\Big[z_{1}+z_{1}',\cdots, z_{n}+z_{n}',
t+t'+2 \mbox{Im} \big(\sum_{j=1}^{n}z_{j} \bar{z}_{j}\big) \Big].
\end{equation*}
The identity of $\mathbb{H}^{n}$ is the origin and the inverse is given
by $[z, t]^{-1} = [-z,-t]$.
Hereafter we agree to identify $\mathbb{C}^{n}$  with $\mathbb{R}^{2n}$ and to use the following notation to denote the
points  of $\mathbb{C}^{n} \times \mathbb{R} \equiv \mathbb{R}^{2n+1}$:
$g=[z,t] \equiv [x,y,t] = [x_{1},\cdots, x_{n}, y_{1},\cdots, y_{n},t]
$
with $z = [z_{1},\cdots, z_{n}]$, $z_{j} =x_{j}+iy_{j}$ and $x_{j}, y_{j}, t \in \mathbb{R}$ for $j=1,\ldots,n$. Then, the
composition law $\circ$ can be explicitly written as
\begin{equation*}
g\circ g'=[x,y,t] \circ [x',y',t'] = [x+x', y+y', t+t' + 2\langle y, x'\rangle -2\langle x, y'\rangle],
\end{equation*}
where $\langle \cdot, \cdot\rangle$ denotes the usual inner product in $\mathbb{R}^{n}$

Consider the dilations
\begin{equation*}
\delta_{r} : \mathbb{H}^{n} \rightarrow \mathbb{H}^{n}, \quad \delta_{r}(g)=\delta_{r} ([z,t]) = [r z, r^{2}t].
\end{equation*}
A trivial computation shows that $\delta_{r}$ is an automorphism of $\mathbb{H}^{n}$ for every $r>0$.
Define a ``norm'' function $\rho$ on $\mathbb{H}^{n}$ by
\begin{equation*}
\rho (g) = \rho([z,t]):= \max\{|z|, |t|^{1/2}\}.
\end{equation*}
It is easy to see that $\rho(g^{-1}) = \rho(-g)  =\rho(g)$,  $\rho(\delta_{r}(g)) = r\rho(g)$,
$\rho(g) = 0$ if and only if $ g=0$,  and
$\rho(g\circ g')  \leq \gamma (\rho(g) + \rho(g')),$
where $\gamma >1$ is a constant.

The Haar measure on $\mathbb{H}^{n}$ is known to just coincide with the Lebesgue measure on $\mathbb{R}^{2n+1}$. 
For any measurable set $E\subset \mathbb{H}^{n},$ we denote by $|E|$ its (Harr) measure. The vector fields
\begin{align*}
T: =\frac{\partial}{\partial t}, \quad&X_{j}:=\frac{\partial}{\partial x_{j}} -2y_{j}\frac{\partial}{\partial t}, \quad
Y_{j}:=\frac{\partial}{\partial y_{j}} +2x_{j}\frac{\partial}{\partial t}, \quad j =1,\cdots, n,
\end{align*}
form a natural basis for the Lie algebra of left-invariant vector fields on $\mathbb{H}^{n}$.
For convenience we set $X_{n+j}:= Y_{j}$ for $j=1,2,\cdots, n$, and set $X_{2n+1} :=T$.
Denote by $\widetilde{X}_{j}$, $j =1, \cdots, 2n+1$, the right-invariant vector field which
coincides with $X_{j}$ at the origin. Let $\mathbb{N}$ be the set of all non-negative integers.
For any multi-index $I=(i_{1}, \cdots, i_{2n+1}) \in \mathbb{N}^{2n+1}$, we set
$X^{I} := X_{1}^{i_{1}} X_{2}^{i_{2}}\cdots X_{2n+1}^{i_{2n+1}}$ and $\widetilde{X}^{I}
:= \widetilde{X}_{1}^{i_{1}} \widetilde{X}_{2}^{i_{2}}\cdots \widetilde{X}_{2n+1}^{i_{2n+1}}$.
It is well known that (\cite{FS})
\begin{equation*}
X^{I}(f_1 \ast f_{2}) = f_{1} \ast (X^{I}f_{2} ), \quad \widetilde{X}^{I}(f_{1} \ast f_{2} ) =
(\widetilde{X}^{I}f _{1})\ast f_{2}, \quad (X^{I}f_{1}) \ast f_{2}
= f_{1} \ast (\widetilde{X}^{I}f_{2}),
\end{equation*}
and
\begin{equation*}
{X}^{I}\tilde{f}  = (-1)^{|I|} \widetilde{\widetilde{X}^{I}f},
\end{equation*}
where $\tilde{f}$ is given by $\tilde{f}(g) := f(g^{-1})$.
We  further set
\begin{equation*}
|I| : =i_{1} + \cdots + i_{2n+1} \quad \mbox{ and } \quad d(I) := i_{1} + \cdots +i_{2n} + 2i_{2n+1} .
\end{equation*}
Then $|I|$ is said to be the order of the differential operators $X^{I} $ and $\widetilde{X}^{I}$,
while $d(I)$ is said to be the homogeneous degree of $X^{I} $ and $\widetilde{X}^{I}$.

\begin{definition}[\cite{S}]
A function $\phi$ is called a normalized bump function on
$\mathbb{H}^{n}$ if $\phi$ is supported in the unit ball $\{g= [z,t] \in \mathbb{H}^{n}: \rho (g) \leq 1\}$
and
\begin{equation} \label{bump}
|\partial^{I}_{z,t}\phi(z,t)| \leq 1
\end{equation}
uniformly for all multi-indices
$I \in \mathbb{N}^{2n+1}$ with $|I| \leq N$,
for some fixed positive integer $N$.
\end{definition}
\begin{remark}
{\rm The condition \eqref{bump} is equivalent (module a constant) to the following one:
\begin{equation} \label{bumpp}
|X^{I}\phi(g)| \leq 1
\end{equation}
for all multi-indices $I$ with $|I| \leq N$. Indeed, this follows from the following the homogeneous property
of the ``norm'' $\rho$ and the fact that
\begin{align*}
X^{I}f(g) &= \sum_{|J| \leq |I|,\ d(J) \geq d(I)}P_{IJ} (g)(\partial_{z,t}^{J}f)(g)\\
(\partial_{z,t}^{I}f)(g) &= \sum_{|J| \leq |I|,\ d(J) \geq d(I)} Q_{IJ}(g) (X^{J}f)(g),
\end{align*}
where $P_{IJ}$, $Q_{IJ}$ are polynomials of homogeneous degree $d(J) - d(I)$ (see \cite{FS}) }.
\end{remark}

We assume that $K$ is a distribution
on $\mathbb{H}^{n}$ that agrees with a function $K(g)$, $g =[z,t] \neq [0,0]$, and satisfies the following regularity conditions:
\begin{align}\label{regularity}
|K(g)| \leq C \rho(g)^{-2n-2} , \quad  |\nabla_{z} K(g)| \leq C \rho(g)^{-2n -3},  \quad
|\frac{\partial}{\partial t} K(g)| \leq C \rho (g)^{-2n -4},
\end{align}
and the cancellation condition
\begin{equation}\label{cancelation}
|K(\phi^{r})| \leq C
\end{equation}
for all normalized bump function $\phi$ and for all $r >0$, where $\phi^{r}(g) = \phi (\delta_{r}(g))$.
It is well known that the classical one-parameter convolution singular
integral $T$ defined by $T(f) = f\ast K$ is bounded on $L^{p}$, $1 <p <\infty$, and on the classical
Hardy spaces on the Heisenberg group $H^{p}(\mathbb{H}^{n})$ for $p \in (p_{0},1]$. 
See \cite{FS} and \cite{S} for more details and proofs.

M\"{u}ller, Ricci and Stein (\cite{MRS}, \cite{MRS2}) proved that Marcinkiewicz multipliers 
are $L^{p}$ bounded for $1<p<\infty $ on the Heisenberg group $\mathbb{H}^{n}.$ This is
surprising since these multipliers are invariant under a two parameter
group of dilations on $\mathbb{C}^{n}\times \mathbb{R}$, while there is
\emph{no} two parameter group of \emph{automorphic} dilations on $\mathbb{H}
^{n}$. Moreover, they show that Marcinkiewicz multiplier can be characterized by convolution 
operator with the form $f\ast K$ where, however, $K$ is a flag kernel. At the endpoint estimates, 
it is natural to expect that Hardy space and $\mathrm{BMO}$ bounds are available. 
However, the lack of automorphic dilations underlies the failure of such
multipliers to be in general bounded on the classical Hardy space $H^{1}$ and also precludes 
a pure product Hardy space theory on the Heisenberg group.
This was the original motivation in \cite{HLS} (see also \cite{HLS2}) to develop a theory of \emph{flag} Hardy
spaces $H_{flag}^{p}$ on the Heisenberg group, $0<p\leq 1$, that is in a
sense `intermediate' between the classical Hardy spaces $H^{p}(\mathbb{H}
^{n})$ and the product Hardy spaces $H_{product}^{p}(\mathbb{C}^{n}\times \mathbb{R})$
(A. Chang and R. Fefferman (\cite{CF1}, \cite{CF2}, \cite{F1}, \cite{F2},
\cite{F3}). They show that singular integrals with flag kernels, which include the
aforementioned Marcinkiewicz multipliers, are bounded on $H_{flag}^{p}$, as
well as from $H_{flag}^{p}$ to $L^{p}$, for $0<p\leq 1$. Moreover, they construct a singular integral 
with a flag kernel on the Heisenberg group, which is not bounded on the classical Hardy spaces $H^{1}(\mathbb{H}
^{n}).$ Since, as pointed out in \cite{HLS, HLS2}, the flag Hardy space $H_{flag}^{p}(\mathbb{H}
^{n})$ is contained in the classical Hardy space $H^{p}(\mathbb{H}
^{n}),$ this counterexample implies that $H_{flag}^{1}(\mathbb{H}
^{n})\subsetneqq H^{1}(\mathbb{H}^{n}).$

\bigskip
A natural question aries: Is it possible that the classical one-parameter singular integrals on the 
Heisenberg group are bounded on flag Hardy spaces $H_{flag}^{p}(\mathbb{H}^{n})$?

\medskip
Note that the classical singular integrals on the Heisenberg group satisfy the one-parameter anisotropic dilation as mentioned above. However, the flag Hardy spaces do not satisfy such a dilation, but satisfy `intermediate' dilation between the one-parameter anisotropic dilation and the product dilation on $\mathbb{C}^{n}\times \mathbb{R}$ implicitly. We would like to point out that Nagel, Ricci and Stein \cite{NRS} introduced a class of singular integrals with flag kernels on the Euclidian space. They also pointed that singular integrals with flag kernels on the Euclidian space belong to product singular integrals, see Remark 2.1.7 and Theorem 2.1.11 in \cite{NRS}, where the characterizations in terms of the corresponding multipliers between the flag and product singular integrals are given. See also \cite{NRSW} for singular integrals with flag kernels on homogeneous groups. Recently, in \cite{Tan} it was proved that the classical Calderon-Zygmund convolution operators on the Euclidean space are bounded on the product Hardy spaces.

In this note we address this deficiency by showing that the classical one-parameter convolution singular integrals on $\mathbb{H}^{n}$ are bounded for flag Hardy spaces on $\mathbb{H}^{n}.$

Before stating the main results in this note,
we begin with recalling the Calder\'on's reproducing formula, Littlewood--Paley square function and the flag Hardy space $H_{flag}^{p}(\mathbb{H}^{n}).$
Let $\psi^{(1)} \in C_{c}^{\infty }\left( \mathbb{H}^{n}\right)$ and all arbitrarily large moments vanish and
such that the following Calder\'{o}n reproducing formula holds:%
\begin{equation*}
f=\int_{0}^{\infty }{({\psi}^{(1)} _{s})}^{\vee}\ast \psi^{(1)} _{s}\ast f\frac{ds}{s},\ \ \
\ \ f\in L^{2}\left( \mathbb{H}^{n}\right) ,
\end{equation*}%
where $\ast $ is Heisenberg convolution, $({\psi}^{(1)})^{\vee} \left( \zeta \right) =%
\overline{\psi^{(1)} \left( \zeta ^{-1}\right) }$ and $\psi^{(1)} _{s}\left( z,u\right)
=s^{-2n-2}\psi^{(1)} \left( \frac{z}{s},\frac{u}{s^{2}}\right) $ for $s>0$.
See Corollary 1 of \cite{GM} for the existence of the function $\psi^{(1)}.$

Let $\psi ^{\left( 2\right) }\in \mathcal{S}\left( \mathbb{R}%
\right) $ satisfying
\begin{equation*}
\int_{0}^{\infty }|\widehat{{\psi }^{(2)}}(t\eta )|^{2}\frac{dt}{t}=1
\end{equation*}%
for all $\eta \in \mathbb{R}\backslash \{0\}.$ Assume along with the following moment
conditions
\begin{eqnarray*}
\int\limits_{\mathbb{H}^{n}}z^{\alpha }u^{\beta }\psi ^{(1)}(z,u)dzdu &=&0,\
\ \ \ \ \left\vert \alpha \right\vert +2\beta \leq M, \\
\int\limits_{\mathbb{R}}v^{\gamma }\psi ^{(2)}(v)dv &=&0,\ \ \ \ \ \gamma
\geq 0.
\end{eqnarray*}%
Here the positive integer $M$ may be taken arbitrarily large.
Thus, we have
\begin{eqnarray}\label{CRF st}
f(z,u)=\int_0^\infty\int_0^\infty \big(\psi_{s,t} \big)^{\vee}\ast \psi_{s,t} \ast f(z,u) {ds\over s}{dt\over t},
\end{eqnarray}%
where $f\in L^2(\mathbb{H}^{n})$, $\widetilde{\psi}_{s,t}(\zeta)= \overline{\psi_{s,t}(\zeta^{-1})} $ for every $\zeta\in\mathbb{H}^n$, 
and the series converges in the $L^2(\mathbb{H}^{n})$ norm. Following \cite{MRS2}, a Littlewood--Paley \emph{component}
function $\psi $ is defined on $\mathbb{H}^{n}\simeq \mathbb{C}^{n}\times
\mathbb{R}$ by the partial convolution $\ast _{2}$ in the second
variable only:
\begin{equation*}
\psi (z,u)=\psi ^{(1)}\ast _{2}\psi ^{(2)}(z,u)=\int_{\mathbb{R}}\psi
^{(1)}(z,u-v)\psi ^{(2)}(v)dv,\ \ \ \ \ \left( z,u\right) \in \mathbb{C}%
^{n}\times \mathbb{R},
\end{equation*}%
and the function $\psi_{s,t} (z,u)$ is given by
$$\psi_{s,t} (z,u)=\psi_s^{(1)}\ast _{2}\psi_t ^{(2)}(z,u)=\int_{\mathbb{R}}\psi_s
^{(1)}(z,u-v)\psi_t ^{(2)}(v)dv. $$

We now set%
\begin{eqnarray*}
\psi^{\prime} _{\mathcal{Q}} &=&\psi _{j}^{\left( 1\right) }\text{ if }%
\mathcal{Q}\in \mathsf{Q}\left( j\right) , \\
\psi^{\prime} _{\mathcal{R}} &=&\psi _{j,k}=\psi _{j}^{\left( 1\right)
}\ast _{2}\psi _{k}^{\left( 2\right) }\text{ if }\mathcal{R}\in \mathsf{R}%
\left( j,k\right) ,
\end{eqnarray*}%
where $\mathcal{Q}\in \mathsf{Q}\left( j\right) $ are cubes and $\mathcal{R}\in \mathsf{R}\left( j,k\right) $ with $k<j$ are rectangles, and
\begin{equation*}
\mathsf{Q}\equiv \bigcup_{j\in \mathbb{Z}}\mathsf{Q}\left( j\right) ,
\end{equation*}%
and the collection of \emph{all strictly vertical} dyadic rectangles as%
\begin{equation*}
\mathsf{R}_{vert}\equiv \bigcup_{j>k}\mathsf{R}\left( j,k\right).
\end{equation*}%
The wavelet Calder\'on reproducing formula is then given by the following (Theorem 3 in \cite{HLS})
\begin{align}\label{HLS discrete formula}
f(z,u)=\sum_{\mathcal{Q}\in \mathsf{Q}}f_{\mathcal{Q}}\
\Psi_{\mathcal{Q}}( z,u) +\sum_{\mathcal{R}\in
\mathsf{R}_{vert}}f_{\mathcal{R}}\ \Psi_{\mathcal{R}}(
z,u) ,\ \ \ \ \ f\in \mathcal{M}_{flag}^{M'+\delta }(
\mathbb{H}^{n}) ,
\end{align}%
where
\begin{eqnarray*}
f_{\mathcal{Q}} &\equiv &c_{\alpha }\left\vert \mathcal{Q}\right\vert \ \psi
_{j,k}\ast f\left( z_{\mathcal{Q}},u_{\mathcal{Q}}\right) ,\ \ \ \ \ \text{%
for }\mathcal{Q}\in \mathsf{Q}\left( j\right) \text{ and }k\geq j, \\
f_{\mathcal{R}} &\equiv &c_{\alpha }\left\vert \mathcal{R}\right\vert \ \psi
_{j,k}\ast f\left( z_{\mathcal{R}},u_{\mathcal{R}}\right) ,\ \ \ \ \ \text{%
for }\mathcal{R}\in \mathsf{R}\left( j,k\right) \text{ and }k<j,
\end{eqnarray*}
the functions $\Psi_{\mathcal{Q}}$ and $\Psi_{\mathcal{R}}$
are in $\mathcal{M}_{flag}^{M^{\prime }+\delta }\left( \mathbb{H}^{n}\right)$ satisfying
$\big\Vert \Psi_{\mathcal{Q}}\big\Vert _{\mathcal{M}%
_{flag}^{M+\delta }( \mathbb{H}^{n}) } \lesssim \big\Vert \psi
_{\mathcal{Q}}^{\prime }\big\Vert _{\mathcal{M}_{flag}^{M+\delta }(
\mathbb{H}^{n}) }$ and
$\big\Vert \Psi_{\mathcal{R}}\big\Vert _{\mathcal{M}%
_{flag}^{M+\delta }( \mathbb{H}^{n}) } \lesssim \big\Vert \psi
_{\mathcal{R}}^{\prime }\big\Vert _{\mathcal{M}_{flag}^{M+\delta }\left(
\mathbb{H}^{n}\right) }$,
and the convergence of the series holds in both $L^{p}\left(
\mathbb{H}^{n}\right) $ and the Banach space $\mathcal{M}%
_{flag}^{M^{\prime }+\delta }\left( \mathbb{H}^{n}\right) $.

Based on the above reproducing formula, the \emph{wavelet} Littlewood--Paley square function is defined by
\begin{equation*}
S_{flag}(f)(z,u):=\left\{ \sum_{\mathcal{Q}\in \mathsf{Q}}\left\vert \psi^{\prime} _{%
\mathcal{Q}}\ast f\left( z_{\mathcal{Q}},u_{\mathcal{Q}}\right)
\right\vert ^{2}\chi _{\mathcal{Q}}\left( z,u\right) +\sum_{\mathcal{R}\in
\mathsf{R}_{vert}}\left\vert \psi^{\prime} _{\mathcal{R}}\ast f\left( z_{%
\mathcal{R}},u_{\mathcal{R}}\right) \right\vert ^{2}\chi _{\mathcal{R}%
}\left( z,u\right) \right\} ^{{\frac{{1}}{{2}}}},
\end{equation*}%
where $\left( z_{\mathcal{Q%
}},u_{\mathcal{Q}}\right) $ is any \emph{fixed} point in the cube $\mathcal{Q%
}$; and $\left( z_{\mathcal{R}},u_{\mathcal{R}}\right) $ is any \emph{fixed} point in
the rectangle $\mathcal{R}$.


We now recall the precise definition of the \emph{flag} Hardy spaces.
\begin{definition}[\cite{HLS, HLS2}]
\label{defflagHardy}Let $0<p<\infty $. Then for $M$ sufficiently large
depending on $n$ and $p$ we define the \emph{flag} Hardy space $%
H_{flag}^{p}\left( \mathbb{H}^{n}\right) $ on the Heisenberg group by%
\begin{equation*}
H_{flag}^{p}\left( \mathbb{H}^{n}\right) :=\left\{ f\in \mathcal{M}%
_{flag}^{M+\delta }\left( \mathbb{H}^{n}\right) ^{\prime }:S_{flag}(f)\in
L^{p}\left( \mathbb{H}^{n}\right) \right\} ,
\end{equation*}%
and for $f\in H_{flag}^{p}\left( \mathbb{H}^{n}\right) $ we set
\begin{equation}
\Vert f\Vert _{H_{flag}^{p}}:=\Vert S_{flag}(f)\Vert _{p}.  \label{H^p norm}
\end{equation}
\end{definition}
See \cite{HLS, HLS2} for more details about structures of dyadic cubes and strictly vertical rectangles, test function space $\mathcal{M}
_{flag}^{M+\delta }\left( \mathbb{H}^{n}\right)$ and its dual $\mathcal{M}_{flag}^{M+\delta }\left( \mathbb{H}^{n}\right) ^{\prime }.$

The main results in this note are the following
\begin{theorem}\label{main theorem}
Suppose that $K$ is a distribution kernel on $\mathbb{H}^{n}$ satisfying the regularity conditions \eqref{regularity} and  the cancelation condition \eqref{cancelation}. Then the operator $T$ defined by $T(f) := f\ast K$
is bounded on $H^{p}_{flag}(\mathbb{H}^{n})$
for ${{4n}\over{4n+1}}<p\leq 1$.
\end{theorem}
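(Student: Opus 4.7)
The plan is to use the discrete Calder\'on reproducing formula \eqref{HLS discrete formula} combined with flag-type almost-orthogonality estimates for the kernel $K$. For $f$ in the dense subspace $\mathcal{M}_{flag}^{M'+\delta}(\mathbb{H}^{n})$, the associativity of Heisenberg convolution gives
\[Tf=\sum_{\mathcal{Q}\in \mathsf{Q}}f_{\mathcal{Q}}\,(\Psi_{\mathcal{Q}}\ast K)+\sum_{\mathcal{R}\in \mathsf{R}_{vert}}f_{\mathcal{R}}\,(\Psi_{\mathcal{R}}\ast K),\]
so that for any $\mathcal{R}'\in \mathsf{R}(j',k')$ one has
\[\psi'_{\mathcal{R}'}\ast Tf(z_{\mathcal{R}'})=\sum_{\mathcal{Q}}f_{\mathcal{Q}}\,\bigl(\psi'_{\mathcal{R}'}\ast \Psi_{\mathcal{Q}}\ast K\bigr)(z_{\mathcal{R}'})+\sum_{\mathcal{R}}f_{\mathcal{R}}\,\bigl(\psi'_{\mathcal{R}'}\ast \Psi_{\mathcal{R}}\ast K\bigr)(z_{\mathcal{R}'}),\]
together with the analogous identity for the cube indices $\mathcal{Q}'$. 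The flag square function $S_{flag}(Tf)$ is thus reduced to controlling these triple convolutions.

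The heart of the argument is a flag almost-orthogonality estimate of the form
\[\bigl|\psi'_{\mathcal{R}'}\ast \Psi_{\mathcal{R}}\ast K(g)\bigr|\lesssim 2^{-|j-j'|\epsilon_{1}}\,2^{-|k-k'|\epsilon_{2}}\,F_{j\wedge j',\,k\wedge k'}(g),\]
where $F_{j,k}$ is a flag-type decaying envelope at the two relevant scales (the sort of kernel appearing in $\mathcal{M}_{flag}^{M+\delta}$), with similar bounds for the mixed cube/rectangle combinations. One obtains these by playing the vanishing moments of $\psi^{(1)}$ and $\psi^{(2)}$ against the size estimate in \eqref{regularity}, and conversely the regularity of $K$ against the smoothness of the wavelet pieces. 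The most delicate case is that of strictly vertical rectangles $\mathcal{R}\in \mathsf{R}(j,k)$ with $k<j$: here $K$ carries only one anisotropic dilation while $\Psi_{\mathcal{R}}$ has two independent scales. The key idea is to treat $\psi^{(1)}_{j}\ast K$ as a single one-parameter object adapted to scale $2^{-j}$ in the horizontal variable (justified by \eqref{regularity} and \eqref{cancelation}, which imply that $\psi^{(1)}_{j}\ast K$ is essentially a normalized bump at scale $2^{-j}$ up to a harmless constant), and then exploit the arbitrarily many vanishing moments of $\psi^{(2)}$ in the $u$-variable to produce the decay $2^{-|k-k'|\epsilon_{2}}$. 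This vertical-rectangle estimate, where the one-parameter nature of $K$ must be reconciled with the two-parameter flag structure, is the principal obstacle of the proof.

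Once the almost-orthogonality estimate is in hand, Schur-type summation together with the Plancherel--P\'olya type inequality for the flag square function from \cite{HLS,HLS2} converts it into a pointwise bound of $S_{flag}(Tf)(z,u)$ by $\bigl(M_{s}(g_{\mathcal{Q},\mathcal{R}})(z,u)\bigr)^{1/r}$, where $M_{s}$ denotes the two-parameter strong maximal operator adapted to the flag structure applied to a discrete coefficient function built from the $|f_{\mathcal{Q}}|^{r}\chi_{\mathcal{Q}}$ and $|f_{\mathcal{R}}|^{r}\chi_{\mathcal{R}}$, and $r>1$ is chosen with $pr>1$ and such that the residual decay factors $2^{-|j-j'|(\epsilon_{1}-\eta)}\,2^{-|k-k'|(\epsilon_{2}-\eta)}$ still sum. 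The threshold $\frac{4n}{4n+1}<p$ encodes precisely the amount of smoothness/cancellation that the one-parameter hypotheses \eqref{regularity} and \eqref{cancelation} allow one to exchange, so that such $r$ can be selected. The flag Fefferman--Stein vector-valued inequality for $M_{s}$ on $L^{pr}(\ell^{2/r})$ (which requires $pr>1$) then yields $\|S_{flag}(Tf)\|_{L^{p}}\lesssim \|S_{flag}(f)\|_{L^{p}}$, which by Definition \ref{defflagHardy} is the desired bound $\|Tf\|_{H^{p}_{flag}}\lesssim \|f\|_{H^{p}_{flag}}$.
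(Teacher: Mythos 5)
Your overall skeleton (discretize $f$, almost--orthogonality for triple convolutions with $K$, Plancherel--P\'olya, strong maximal function and Fefferman--Stein, with the threshold coming from the decay exponent $2n+\tfrac12$) is the same as the paper's, and your idea for the core kernel estimate --- playing Stein's bump estimate for $\psi^{(1)}_{j}\ast K$ against the vanishing moments of $\psi^{(2)}$ in the central variable --- is exactly how Lemma \ref{lemma orth} is proved there. The genuine gap is at the point where you expand $Tf$ using the original wavelet formula \eqref{HLS discrete formula}: the synthesis functions $\Psi_{\mathcal{Q}},\Psi_{\mathcal{R}}$ appearing there are \emph{not} the explicit flag wavelets $\psi_{j,k}=\psi^{(1)}_{j}\ast_{2}\psi^{(2)}_{k}$; they are implicitly defined molecules (produced by inverting the discretization operator) that are only controlled in the $\mathcal{M}_{flag}^{M+\delta}$ norm. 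Consequently your key estimate $|\psi'_{\mathcal{R}'}\ast\Psi_{\mathcal{R}}\ast K|\lesssim 2^{-|j-j'|\epsilon_{1}}2^{-|k-k'|\epsilon_{2}}F_{j\wedge j',k\wedge k'}$ cannot be obtained by the method you sketch, since that method uses the compact support of $\psi^{(1)}$, the exact partial-convolution structure $\psi^{(1)}_{j}\ast_{2}\psi^{(2)}_{k}$, and the moments of $\psi^{(2)}$ --- none of which $\Psi_{\mathcal{R}}$ is known to possess. Regrouping does not rescue it: estimating $(\psi'_{\mathcal{R}'}\ast\Psi_{\mathcal{R}})\ast K$ or $\Psi_{\mathcal{R}}\ast K$ would require knowing that convolution with $K$ preserves flag molecules, which is essentially the theorem being proved. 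The paper circumvents precisely this obstacle with Theorem \ref{Calderon new}: a modified discrete reproducing formula \eqref{eq:1} in which the functions adjacent to $K$ are explicit translates of $\widetilde{\psi}_{j',k'}$, the implicitness being pushed into a companion function $h=T_{\alpha,N}^{-1}f$ with $\|h\|_{H^{p}_{flag}}\approx\|f\|_{H^{p}_{flag}}$; establishing this (via Proposition \ref{boundedness on molecular} and the invertibility of $T_{N}$ for $N$ large) is a substantive step missing from your proposal, and only after it does the triple convolution take the form $\psi'_{\mathcal{R}}\ast K\ast\widetilde{\psi}_{j',k'}$, to which Lemmas \ref{orthogonal}--\ref{lemma orth} apply through the factorization $(\psi^{(1)}_{j}\ast K\ast\psi^{(1)}_{j'})\ast_{\mathbb{R}}(\psi^{(2)}_{k}\ast\psi^{(2)}_{k'})$.

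A secondary but real issue is your exponent bookkeeping in the maximal-function step. One must take $\tfrac{4n}{4n+1}<r<p\leq 1$ (so $r<1$), the lower bound coming from the $z$-decay $2n+\tfrac12$ in the almost-orthogonality estimate, and then apply the Fefferman--Stein vector-valued inequality in $L^{p/r}(\ell^{2/r})$, which is legitimate because $p/r>1$ and $2/r>1$. Your choice ``$r>1$ with $pr>1$'' and the space $L^{pr}(\ell^{2/r})$ do not yield the final $L^{p}$ bound, since then $p/r<1$ and the vector-valued maximal inequality is unavailable at that exponent.
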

We remark that the lower bound ${{4n}\over{4n+1}}$ for $p$ in Theorem \ref{main theorem} can be getting smaller if the regularity and cancellation conditions on $K$ are required to be getting higher. We leave these details to the reader.

As a consequence of Theorem \ref{main theorem} and the duality of  $H^{1}_{flag}(\mathbb{H}^{n})$ with $BMO_{flag}(\mathbb{H}^{n})$ as given in \cite{HLS,HLS2}, we obtain
\begin{corollary}
Suppose that $K$ is a distribution kernel on $\mathbb{H}^{n}$ as given in Theorem 1.4.
Then the operator $T$ defined by $T(f) := f\ast K$
is bounded on $BMO_{flag}(\mathbb{H}^{n}).$
\end{corollary}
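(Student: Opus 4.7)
The corollary follows from Theorem \ref{main theorem} at the endpoint $p=1$ by a duality argument. The plan is to identify the $L^{2}$-adjoint $T^{\ast}$ of $T$ as another convolution operator of exactly the same type, apply the theorem to $T^{\ast}$, and then dualize via the pairing $\bigl(H^{1}_{flag}(\mathbb{H}^{n})\bigr)^{\ast}=BMO_{flag}(\mathbb{H}^{n})$ provided by \cite{HLS, HLS2}.

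With respect to the pairing $\langle f,g\rangle=\int f\bar{g}$, a direct calculation using the associativity of Heisenberg convolution and invariance of Haar measure under inversion yields $T^{\ast}g = g\ast K^{\ast}$ with $K^{\ast}(\zeta):=\overline{K(\zeta^{-1})}$. The kernel $K^{\ast}$ satisfies the same hypotheses as $K$: since $\rho(\zeta^{-1})=\rho(\zeta)$, the pointwise bounds \eqref{regularity} transfer immediately by the chain rule, producing only sign changes and polynomial factors that are absorbed by the existing decay. For the cancellation condition \eqref{cancelation}, the substitution $g\mapsto g^{-1}$ yields $K^{\ast}(\phi^{r})=\overline{K(\widetilde{\phi}^{r})}$ with $\widetilde{\phi}(g):=\overline{\phi(g^{-1})}$, and $\widetilde{\phi}$ is again a normalized bump (up to a uniform constant depending on $N$) by the equivalence between \eqref{bump} and \eqref{bumpp} combined with the identity $X^{I}\widetilde{f}=(-1)^{|I|}\widetilde{\widetilde{X}^{I}f}$ recorded earlier in the excerpt. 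Hence the hypothesis $|K^{\ast}(\phi^{r})|\leq C$ holds.

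Now Theorem \ref{main theorem} applied to $T^{\ast}$ with $p=1$ yields boundedness of $T^{\ast}$ on $H^{1}_{flag}(\mathbb{H}^{n})$. Taking Banach-space adjoints through the duality $\bigl(H^{1}_{flag}(\mathbb{H}^{n})\bigr)^{\ast}=BMO_{flag}(\mathbb{H}^{n})$, one obtains a bounded operator on $BMO_{flag}(\mathbb{H}^{n})$ whose restriction to the test-function class $\mathcal{M}^{M+\delta}_{flag}(\mathbb{H}^{n})$ defining the dual pairing coincides with $T$; this is the desired extension. The only subtle point is to verify that the formal $L^{2}$-adjoint and the Banach-space adjoint agree on that test-function subspace, which is ensured by the construction of the duality in \cite{HLS, HLS2}, so no genuine obstacle arises.
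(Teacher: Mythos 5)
Your proposal is correct and follows essentially the same route as the paper, which deduces the corollary directly from Theorem \ref{main theorem} at $p=1$ together with the duality $\bigl(H^{1}_{flag}(\mathbb{H}^{n})\bigr)^{\ast}=BMO_{flag}(\mathbb{H}^{n})$ from \cite{HLS, HLS2}. The paper states this in one line without details; your verification that the adjoint kernel $K^{\ast}(\zeta)=\overline{K(\zeta^{-1})}$ satisfies the same size, smoothness and cancellation hypotheses (indeed trivially, since $\zeta\mapsto\zeta^{-1}$ is just $(z,t)\mapsto(-z,-t)$) simply fills in the details of that same argument.
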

The main idea to show our results is to apply the discrete Calder\'on reproducing formula, almost orthogonal estimates associated with the flag structure and the Fefferman--Stein vector valued maximal function.


\vspace{0.2cm}

{\bf Notations:} Throughout this paper, $\mathbb{N}$ will denote the set of all nonnegative integers.
For any function $f$ on $\mathbb{H}^{n}$, we define $\tilde{f}(g)= f(g^{-1})$ and $f^{\vee}(g) =
\overline{\tilde{f}(g)} = \overline{f(g^{-1})}$, $g\in \mathbb{H}^{n}$.
If $h$ is a fixed point on $\mathbb{H}^{n}$, we define the function $f_{h}$ by
$f_{h}(g): = f(h\circ g)$, $g \in \mathbb{H}^{n}$. Finally, if $f$ is a function or distribution on
$\mathbb{H}^{n}$ and $r>0$, we set $D_{r} f (g) = r^{2n+2}f(\delta_{r}(g))$.


\section{Proof of Theorem \ref{main theorem}}
\setcounter{equation}{0}

Note that it was proved in \cite{HLS, HLS2} that $L^2(\mathbb{H}^{n})\cap H^{p}_{flag}(\mathbb{H}^{n})$ is dense in $H^{p}_{flag}(\mathbb{H}^{n}).$ To show Theorem \ref{main theorem}, by the Definition 1.3 of the flag Hardy space, it suffices to prove that there exists a constant $C$ such that for every $f\in L^2(\mathbb{H}^{n})\cap H^{p}_{flag}(\mathbb{H}^{n}),$
\begin{equation}\label{one-parameter}
\bigg\|\bigg\{ \sum_{\mathcal{Q}\in \mathsf{Q}}\left\vert \psi^{\prime} _{%
\mathcal{Q}}\ast T(f)\left( z_{\mathcal{Q}},u_{\mathcal{Q}}\right)
\right\vert ^{2}\chi _{\mathcal{Q}}\left( z,u\right)\bigg\} ^{{\frac{{1}}{{2}}}}\bigg\|_p\leq C\|f\|_{H^{p}_{flag}(\mathbb{H}^{n})}
\end{equation}
and
\begin{equation}\label{flag}
\bigg\|\bigg\{\sum_{\mathcal{R}\in
\mathsf{R}_{vert}}\left\vert \psi^{\prime}_{\mathcal{R}}\ast T(f)\left( z_{%
\mathcal{R}},u_{\mathcal{R}}\right) \right\vert ^{2}\chi _{\mathcal{R}%
}\left( z,u\right) \bigg\} ^{{\frac{{1}}{{2}}}}\bigg\|_p\leq C\|f\|_{H^{p}_{flag}(\mathbb{H}^{n})}.
\end{equation}%
To achieve the estimates in \eqref{one-parameter} and \eqref{flag}, we need the almost orthogonality estimates and a new version of discrete Calder\'on -type reproducing formula. We first give the almost orthogonality estimate as follows.
\begin{lemma}\label{almost ortho psi phi}
Suppose that $\varphi, \phi$ are functions on $\mathbb{H}^{n}$ satisfying that for all $g \in \mathbb{H}^{n},$
\begin{align*}
\int_{\mathbb{H}^{n}}\varphi(g)dg = 0, \int_{\mathbb{H}^{n}}\phi(g)dg = 0, \quad &\\
|\varphi(g)|, |\phi(g)| \leq C\frac{1}{( 1 +\rho(g))^{2n+3}}, \quad & \\
|\nabla_{z}\varphi(g)|, |\nabla_{z}\phi(g)| \leq C\frac{1}{( 1 +\rho(g))^{2n+4}}, \quad & and\\
|\frac{\partial}{\partial t}\varphi(g)|, |\frac{\partial}{\partial t}\phi(g)| \leq C\frac{1}{( 1 +\rho(g))^{2n+5}}.
\end{align*}
Then for any $\varepsilon  \in (0,1)$, there is a constant $C>0$ such that for all $j, j' \in \mathbb{Z},$
\begin{equation*}
|\varphi_{j} \ast \phi_{j'}(g)| \lesssim 2^{-|j-j'|\varepsilon} \frac{2^{-(j\wedge j')}}{(2^{-(j\wedge j')} + \rho(g))^{2n+3}}.
\end{equation*}
where $\varphi_{j}(g):= (D_{2^{j}}\varphi)(g) = 2^{j(2n+2)}\varphi(\delta_{2^{j}}(g))$.
\end{lemma}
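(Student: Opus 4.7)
The plan is to first reduce to the base case $j'=0$, $j\geq 0$ via dilation invariance, and then interpolate between a ``direct'' size estimate and a ``cancellation'' estimate.

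\textit{Reduction.} By the symmetry of the claimed bound in $j$ and $j'$, I assume $j\geq j'$ (the opposite case is handled analogously by placing the cancellation on $\phi$ rather than on $\varphi$). The change of variables $h = \delta_{2^{-j'}}(h')$, together with the automorphism identities $\delta_r(\xi\circ g)=\delta_r(\xi)\circ\delta_r(g)$ and $(\delta_r\xi)^{-1}=\delta_r(\xi^{-1})$, gives the scaling relation
\[
 (\varphi_j\ast\phi_{j'})(g) = 2^{j'(2n+2)}\,(\varphi_k\ast\phi)(\delta_{2^{j'}}g),\qquad k := j-j'\geq 0.
\]
Hence it suffices to show
\[
 |\varphi_k\ast\phi(\eta)| \lesssim \frac{2^{-k\varepsilon}}{(1+\rho(\eta))^{2n+3}}\qquad\text{for all }\eta\in\mathbb{H}^n.
\]

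\textit{Two pointwise bounds.} The first, obtained without cancellation, comes from splitting $\varphi_k\ast\phi(\eta)=\int\varphi_k(h)\phi(h^{-1}\circ\eta)\,dh$ into the regions $\{\rho(h)\leq \rho(\eta)/(2\gamma)\}$, on which $\rho(h^{-1}\circ\eta)\sim\rho(\eta)$ and the decay of $\phi$ is used, and its complement, on which the decay of $\varphi_k$ at scale $2^{-k}$ is used; this yields
\[
 |\varphi_k\ast\phi(\eta)| \lesssim (1+\rho(\eta))^{-(2n+3)}.
\]
The second bound uses $\int\varphi_k = 0$ to subtract $\phi(\eta)$ in the integrand, and then splits integration in the same way. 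In the small-$h$ region, a fundamental-theorem-of-calculus computation along the path $\tau\mapsto\delta_\tau(h^{-1})\circ\eta$---noting that the $t$-component of $h^{-1}\circ\eta$ differs from that of $\eta$ by $-t_h - 2\,\mathrm{Im}\langle z_h,\bar z_\eta\rangle$---gives
\[
 |\phi(h^{-1}\circ\eta) - \phi(\eta)| \lesssim |z_h|\sup_{\rho(\xi)\sim\rho(\eta)}|\nabla_z\phi(\xi)| + (|t_h|+|z_h||z_\eta|)\sup_{\rho(\xi)\sim\rho(\eta)}|\partial_t\phi(\xi)|,
\]
which upon invoking the hypothesised decay of $\nabla_z\phi$ and $\partial_t\phi$ is controlled by $\rho(h)(1+\rho(\eta))^{-(2n+4)}$. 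In the large-$h$ region the decay of $\varphi_k$ away from scale $2^{-k}$ supplies the gain $2^{-k}$. Combining, one obtains (up to a factor $\log(2^k)$ arising from the marginal integrability of $\rho(h)|\varphi_k(h)|$)
\[
 |\varphi_k\ast\phi(\eta)| \lesssim \frac{2^{-k(1-\varepsilon')}}{(1+\rho(\eta))^{2n+3}}
\]
for any $\varepsilon'>0$.

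\textit{Interpolation and main obstacle.} Taking the geometric mean $A\leq A^{1-\theta}A^\theta$ of the two pointwise bounds supplies the desired estimate for every $\varepsilon\in(0,1)$. The main technical subtlety is the Heisenberg twist: the $t$-component of $h^{-1}\circ\eta$ contains a term of order $\rho(h)\rho(\eta)$ rather than $\rho(h)$ alone. This is precisely why the hypothesis requires two extra powers of decay on $\partial_t\phi$ (namely $(1+\rho)^{-(2n+5)}$): the extra $\rho(\eta)$ in the twist is compensated by one of those powers, while the other matches the parabolic $t$-direction scaling $|t_h|\leq \rho(h)^2$, so that the cancellation bound retains the full $(1+\rho(\eta))^{-(2n+3)}$ decay that is needed for the interpolation to produce the correct pointwise kernel.
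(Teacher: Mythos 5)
The paper itself omits the proof of this lemma (it is dismissed as routine), so there is no argument of the authors to compare against; the architecture you chose --- rescaling by the coarser dilation, a size bound plus a cancellation bound obtained from the fundamental theorem of calculus along $\tau\mapsto\delta_{\tau}(h^{-1})\circ\eta$, with the Heisenberg twist $2\,\mathrm{Im}\langle z_h,\bar z_\eta\rangle$ absorbed by the stronger decay imposed on $\partial_t\phi$ --- is exactly the standard one, and those parts are correct. The genuine problem is the splitting threshold in your cancellation estimate. You split at $\rho(h)\le \rho(\eta)/(2\gamma)$, which is fine for the pure size bound but not for the cancellation bound: when $\rho(\eta)\lesssim 2^{-k}$ (in particular at the group identity, where your small-$h$ region is empty), the entire core of $\varphi_k$ falls into the large-$h$ region, where you use only size bounds; there $\int_{\rho(h)>\rho(\eta)/(2\gamma)}|\varphi_k(h)|\,dh\sim 1$ and $|\phi|\lesssim 1$, so your estimate degenerates to $O(1)$ rather than $O(2^{-k(1-\varepsilon')})$. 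More generally, for $2^{-k}\lesssim\rho(\eta)\le 1$ the large-$h$ region yields only $2^{-k}/\rho(\eta)$, which is far weaker than claimed. But $\rho(\eta)\lesssim 2^{-k}$ is precisely the regime where the factor $2^{-|j-j'|\varepsilon}$ --- the whole content of the lemma --- must be produced, so as written neither the intermediate cancellation bound nor the final estimate is established there.

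The fix is small and standard: split instead at $\rho(h)\le c\,(1+\rho(\eta))$ with $c$ small (equivalently, for $\rho(\eta)\le 1$ split at a fixed small constant). Your mean value computation remains valid on this larger region, since the path $\delta_{\tau}(h^{-1})\circ\eta$ then stays where $1+\rho(\xi)\sim 1+\rho(\eta)$, and it still gives $|\phi(h^{-1}\circ\eta)-\phi(\eta)|\lesssim \rho(h)(1+\rho(\eta))^{-(2n+4)}$; on the complement one now has $\rho(h)\gtrsim 1+\rho(\eta)\ge 1$, so $|\varphi_k(h)|\lesssim 2^{-k}\rho(h)^{-(2n+3)}$ supplies both the $2^{-k}$ gain and the $(1+\rho(\eta))^{-(2n+3)}$ decay. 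Two minor remarks: the logarithm you flag is really $\log\bigl(2+2^{k}(1+\rho(\eta))\bigr)$, not just $\log 2^{k}$, but it is harmless because the smoothness bound carries the spare power $(1+\rho(\eta))^{-(2n+4)}$ while the target needs only $2n+3$; alternatively it can be avoided entirely by using $\min\{\rho(h)(1+\rho(\eta))^{-(2n+4)},(1+\rho(\eta))^{-(2n+3)}\}\lesssim \rho(h)^{\theta}(1+\rho(\eta))^{-(2n+3+\theta)}$ with $\theta<1$, for which $\int|\varphi_k(h)|\rho(h)^{\theta}\,dh\lesssim 2^{-k\theta}$ converges. Your reduction of the case $j<j'$ by placing the cancellation on $\phi$ is legitimate, since the right-translation twist has the same size $|z_w||z_g|$ and the hypotheses on $\varphi$ and $\phi$ are symmetric. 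With the corrected splitting the proof goes through.
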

The proof of Lemma \ref{almost ortho psi phi} is routing and we omit the details of the proof.

\begin{lemma} \label{orthogonal}
Suppose $K$ is a classical Calder\'{o}n--Zygmund kernel and
$\psi^{(1)}$ is a smooth function on $\mathbb{H}^{n}$ with support in  $B(0,1/100\gamma b)$
(where $\gamma >1$ is the constant
in the quasi-triangle inequality for the ``norm'') and $b >1$ is the constant in the stratified mean value theorem
\cite{FS}),
and $\int_{\mathbb{H}^{n}} \psi^{(1)}(g)dg =0$.  Then for any $\varepsilon \in (0,1)$, there
is a constant $C>0$ such that for any $0<\varepsilon<1$ and all $j, j' \in \mathbb{Z}$,
\begin{equation} \label{des}
|\psi_{j}^{(1)} \ast K \ast \psi_{j'}^{(1)}(g)| \lesssim 2^{-|j-j'|\varepsilon}\frac{2^{-(j \wedge j')}}
{ (2^{-(j \wedge j') } + \rho(g))^{2n+3}},
\end{equation}
where $\psi^{(1)}_{j}(g) : =(D_{2^{j}}\psi^{(1)})(g)= 2^{(2n+2)j}\psi(\delta_{2^{j}}(g))$.
\end{lemma}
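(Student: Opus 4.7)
The plan is to reduce Lemma \ref{orthogonal} to Lemma \ref{almost ortho psi phi} by showing that $\Theta_{j'}(g) := K \ast \psi_{j'}^{(1)}(g)$ is, up to uniform constants, a Schwartz molecule at scale $2^{-j'}$. Once this is established, writing $\psi_j^{(1)} \ast K \ast \psi_{j'}^{(1)} = \psi_j^{(1)} \ast \Theta_{j'}$ and appealing directly to Lemma \ref{almost ortho psi phi} gives the claimed bound \eqref{des}.

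The heart of the proof is therefore to verify the pointwise size estimate
\[
|\Theta_{j'}(g)| \lesssim \frac{2^{-j'}}{(2^{-j'} + \rho(g))^{2n+3}},
\]
together with the analogous bounds on $|\nabla_z \Theta_{j'}|$ and $|\partial_t \Theta_{j'}|$ with one and two additional orders of decay, and the cancellation $\int \Theta_{j'} \, dg = 0$. In the exterior regime $\rho(g) \gtrsim 2^{-j'}$, I would use the mean zero of $\psi_{j'}^{(1)}$ to write
\[
\Theta_{j'}(g) = \int_{\mathbb{H}^{n}} \bigl[K(g h^{-1}) - K(g)\bigr]\,\psi_{j'}^{(1)}(h)\,dh,
\]
and combine the Calder\'on--Zygmund regularity \eqref{regularity} on $K$ with the support bound $\rho(h) \lesssim 2^{-j'}/(100\gamma b)$ on $\psi_{j'}^{(1)}$. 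Differentiating in $z$ or $t$ passes onto $K$, whose derivatives enjoy one more order of decay, yielding the sharper regularity estimates. In the interior regime $\rho(g) \lesssim 2^{-j'}$, I would invoke the cancellation condition \eqref{cancelation} applied to the translated and rescaled normalized bump $h \mapsto \psi^{(1)}(\delta_{2^{j'}}(g h^{-1}))$, obtaining the uniform bound $|\Theta_{j'}(g)| \lesssim 2^{j'(2n+2)}$.

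The main obstacle is the interior estimate together with the exact cancellation $\int \Theta_{j'}\,dg = 0$: near the origin $K$ is only a distribution, so both the pointwise bound and the cancellation rely on carefully matching the weak cancellation \eqref{cancelation} of $K$ against the normalized-bump structure of $\psi_{j'}^{(1)}$ (in the translated form above), and, for the mean zero of $\Theta_{j'}$, on a limiting procedure approximating $1$ by compactly supported bumps. Once all of this is in place, a routine rescaling puts $\Theta_{j'}$ in the form $D_{2^{j'}} \eta_{j'}$ for a function $\eta_{j'}$ satisfying the hypotheses of Lemma \ref{almost ortho psi phi} uniformly in $j'$, and the almost orthogonality bound follows for every $\varepsilon \in (0,1)$ from that lemma.
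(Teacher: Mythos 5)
Your overall strategy (absorb $K$ into one of the bumps, show the result is a molecule at that scale, then invoke Lemma \ref{almost ortho psi phi}) is the same in spirit as the paper's, and several of your ingredients are fine: the interior bound via the cancellation condition \eqref{cancelation}, the exterior size bound via the mean zero of $\psi^{(1)}_{j'}$ together with \eqref{regularity}, and the vanishing of $\int\Theta_{j'}$ by a limiting argument; this is essentially a self-contained derivation of the key estimates \eqref{key}--\eqref{keyy} that the paper simply quotes from Stein. The gap is in the final reduction. For $\Theta_{j'}=D_{2^{j'}}\eta_{j'}$ to have $\eta_{j'}$ satisfying the hypotheses of Lemma \ref{almost ortho psi phi} uniformly in $j'$, you need $|\nabla_z\Theta_{j'}(g)|\lesssim 2^{-j'}\,(2^{-j'}+\rho(g))^{-(2n+4)}$ and $|\partial_t\Theta_{j'}(g)|\lesssim 2^{-j'}\,(2^{-j'}+\rho(g))^{-(2n+5)}$; at $\rho(g)\sim 1$ these demand smallness of order $2^{-j'}$. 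Letting the derivative fall on $K$, as you propose, only gives $|\nabla_z\Theta_{j'}(g)|\lesssim (2^{-j'}+\rho(g))^{-(2n+3)}$ and $|\partial_t\Theta_{j'}(g)|\lesssim (2^{-j'}+\rho(g))^{-(2n+4)}$, which after rescaling yield $|\nabla_z\eta_{j'}|\lesssim(1+\rho)^{-(2n+3)}$, one order short of the hypothesis of Lemma \ref{almost ortho psi phi}. To recover the missing factor $2^{-j'}/(2^{-j'}+\rho(g))$ you would have to test the cancellation of $\psi^{(1)}_{j'}$ against the smoothness of $\nabla_z K$ and $\partial_t K$, i.e. second-order regularity of $K$, which is not assumed in \eqref{regularity}. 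So the sentence claiming that a routine rescaling places $\eta_{j'}$ under the hypotheses of Lemma \ref{almost ortho psi phi} is exactly where the argument breaks.

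The paper sidesteps this by never needing smoothness of the $K$-combined factor: it absorbs $K$ into whichever bump has the \emph{finer} scale, writing $\psi_{j}^{(1)}\ast K\ast\psi_{j'}^{(1)}=D_{2^{j}}[\psi^{(1)}\ast(D_{2^{-j}}K)]\ast(D_{2^{j'}}\psi^{(1)})$ when $j\geq j'$ and the mirror identity when $j<j'$, so that the gain $2^{-|j-j'|\varepsilon}$ comes from the cancellation of the finer, $K$-combined factor against the smoothness of the genuine bump $\psi^{(1)}$, and only the size bounds \eqref{key}--\eqref{keyy} plus mean zero of the combined function are used. If you prefer your symmetric choice $\Theta_{j'}=K\ast\psi^{(1)}_{j'}$, you can still close the argument, but not by quoting Lemma \ref{almost ortho psi phi} as a black box: in the case $j>j'$ the smoothness you can actually prove, $|\nabla_z\Theta_{j'}|\lesssim(2^{-j'}+\rho)^{-(2n+3)}$, does suffice, since the cancellation of $\psi^{(1)}_{j}$ produces $\int|\psi^{(1)}_{j}(h)|\rho(h)\,dh\sim 2^{-j}$ and hence the bound $2^{-(j-j')}\,2^{-j'}(2^{-j'}+\rho(g))^{-(2n+3)}$ consistent with \eqref{des}; but this requires proving an asymmetric variant of the almost-orthogonality lemma with these weaker hypotheses (or splitting cases as the paper does), rather than the lemma as stated.
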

\begin{proof}

We first recall that there is a constant $C$ independent of $j $ such that
\begin{equation} \label{key}
|(D_{2^{-j}}K) \ast \psi (g)| \leq C \frac{1}{(1+ \rho(g))^{2n+3}}.
\end{equation}
See \cite{S} for the detail of the proof.
Note that we also have
\begin{equation} \label{keyy}
|\psi^{(1)} \ast (D_{2^{-j}}K) (g)| \lesssim \frac{1}{(1+ \rho(g))^{2n+3}}.
\end{equation}
Indeed, this follows from \eqref{key}, the observation
$\psi^{(1)} \ast (D_{2^{-j}}K) (g) = (D_{2^{-j}} \widetilde{K}) \ast \widetilde{\psi^{(1)}} (g^{-1})$,
and the fact that $\widetilde{K}$ satisfies the same size, smoothness, and cancellation conditions to  $K$.

Now we can derive \eqref{des} from \eqref{key} and \eqref{keyy}. To see this, we write
\begin{equation*}
\begin{split}
\psi_{j}^{(1)} \ast K \ast \psi_{j'}^{(1)}
&= (D_{2^{j}}\psi^{(1)} ) \ast  K \ast  (D_{2^{j'}}\psi^{(1)} )  \\
& = \begin{cases}
D_{2^{j}}[\psi^{(1) } \ast (D_{2^{-j}}K) ] \ast (D_{2^{j'}}\psi^{(1)}) & \mbox{if  } j \geq j' ,\\
(D_{2^{j}}\psi^{(1)}) \ast D_{2^{j'}}[(D_{2^{-j'}}K) \ast \psi^{(1)} ] & \mbox{if   }  j < j'.
\end{cases}
\end{split}
\end{equation*}
Thus by Lemma \ref{almost ortho psi phi} we obtain
\begin{align*}
|\psi_{j}^{(1)} \ast K \ast \psi_{j'}^{(1)}(g) |
&\lesssim \begin{cases}
2^{-(j-j')\varepsilon} \frac{\displaystyle 2^{-j'}}{\displaystyle (2^{-j'}+ \rho(g))^{2n+3}}
& \mbox{if }   j  \geq j' \\[10pt]
2^{-(j'-j)\varepsilon} \frac{\displaystyle 2^{-j }}{\displaystyle (2^{-j}+ \rho(g))^{2n+3}} & \mbox{if  } j < j'
\end{cases} \\
& = 2^{-|j-j'|\varepsilon } \frac{\displaystyle 2^{-(j \wedge j') }}{\displaystyle (2^{-(j\wedge j')}+ \rho(g))^{2n+3}},
\end{align*}
for any $\varepsilon \in (0,1)$. The proof of Lemma \ref{orthogonal} is concluded.
\end{proof}

The key estimate is the following
\begin{lemma}\label{lemma orth}
Let $\psi^{(1)}$ be as in Lemma \ref{orthogonal} and let $\psi^{(2)} \in \mathcal{S}(\mathbb{R})$
with $\int_{\mathbb{R}} \psi^{(2)}(u)udu =0$.  Set $\psi^{(1)}_{j}(g): = 2^{j(2n+2)}\psi^{(1)}(\delta_{2^{j}}(g))$,
$\psi^{(2)}_{k}(u): = 2^{k}\psi^{(2)}(2^{k}u)$, and
$\psi_{j, k} (g) = \psi_{j,k}(z,u) := [\psi^{(1)}_{j}(z, \cdot) \ast_{\mathbb{R}} \psi_{k}^{(2)}](t) = \int_{\mathbb{R}}
\psi_{j}^{(1)}(z, t-u)\psi_{k}^{(2)}(u)du$.
Then, for $\varepsilon \in (0,1),$
\begin{align*}
&|\psi_{j,k} \ast K \ast \psi_{j', k'}(z,t)| \\
&\lesssim
\begin{cases}
2^{-|j-j'|\varepsilon}2^{-|k-k'|}\frac{\displaystyle 2^{-(j \wedge j')/2}}{\displaystyle (2^{-(j \wedge j')}
+ |z|)^{2n + \frac{1}{2}}}\frac{\displaystyle 2^{-(k \wedge k')/4}}{\displaystyle (2^{-k\wedge k'} + |t|)^{1 + \frac{1}{4}}}
&\mbox{if  } 2(j \wedge j') \geq k \wedge k', \\[10pt]
 2^{-|j-j'|\varepsilon}2^{-|k-k'|}\frac{2^{\displaystyle -(j \wedge j')/2}}{\displaystyle (2^{-(j \wedge j')}
+ |z|)^{2n+ \frac{1}{2}}}\frac{\displaystyle 2^{-(j \wedge j')/2}}{\displaystyle (2^{-(j \wedge j')}
+ \sqrt{|t|})^{2  + \frac{1}{2}}}  & \mbox{if } 2(j \wedge j') \leq k \wedge k'.
\end{cases}
\end{align*}
\end{lemma}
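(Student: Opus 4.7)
The plan is to reduce the triple Heisenberg convolution to a separable product of the Heisenberg almost orthogonality estimate of Lemma~\ref{orthogonal} with a one-dimensional almost orthogonality estimate in the central variable, and then to carry out the resulting Euclidean convolution in $t$ in each of the two scale regimes.

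\emph{Step 1 (factorization).} Any distribution of the form $\tilde g(z,t)=\delta_{0}(z)g(t)$ is central in the Heisenberg convolution algebra: a direct computation from the group law (the ``twist'' $2\,\mathrm{Im}(\bar{z'}z)$ vanishes on the diagonal $z'=z$) gives
\begin{equation*}
(f\ast \tilde g)(z,t)=(\tilde g\ast f)(z,t)=\int_{\mathbb{R}}f(z,t-u)g(u)\,du.
\end{equation*}
Applying this with $g=\psi_{k}^{(2)}$ and $g=\psi_{k'}^{(2)}$, and using associativity, I obtain the key factorization
\begin{equation*}
\psi_{j,k}\ast K\ast \psi_{j',k'}\;=\;F_{j,j'}\,\ast_{2}\,G_{k,k'},
\end{equation*}
where $F_{j,j'}:=\psi_{j}^{(1)}\ast K\ast \psi_{j'}^{(1)}$ lives on $\mathbb{H}^{n}$ and $G_{k,k'}:=\psi_{k}^{(2)}\ast_{\mathbb{R}}\psi_{k'}^{(2)}$ lives on $\mathbb{R}$.

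\emph{Step 2 (input estimates).} Set $a:=2^{-(j\wedge j')}$ and $b:=2^{-(k\wedge k')}$. Lemma~\ref{orthogonal} provides
\begin{equation*}
|F_{j,j'}(z,t)|\;\lesssim\;2^{-|j-j'|\varepsilon}\,\frac{a}{(a+\rho([z,t]))^{2n+3}},
\end{equation*}
which I would split, using $(a+\max(|z|,\sqrt{|t|}))^{2n+3}\geq (a+|z|)^{2n+1/2}(a+\sqrt{|t|})^{5/2}$ and writing $a=a^{1/2}\cdot a^{1/2}$, as
\begin{equation*}
|F_{j,j'}(z,t)|\;\lesssim\;2^{-|j-j'|\varepsilon}\,\frac{a^{1/2}}{(a+|z|)^{2n+1/2}}\cdot\frac{a^{1/2}}{(a+\sqrt{|t|})^{5/2}}.
\end{equation*}
For $G_{k,k'}$, the one-dimensional Coifman--Meyer almost orthogonality (using the moment vanishing and the Schwartz decay of $\psi^{(2)}$) yields, for any large $N$,
\begin{equation*}
|G_{k,k'}(u)|\;\lesssim\;2^{-|k-k'|}\,\frac{b^{N-1}}{(b+|u|)^{N}}.
\end{equation*}

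\emph{Step 3 (central convolution).} The $z$-dependence factors out of the integral $F_{j,j'}\ast_{2}G_{k,k'}(z,t)$, so the entire proof reduces to estimating
\begin{equation*}
I(t)=\int_{\mathbb{R}}\frac{a^{1/2}}{(a+\sqrt{|t-u|})^{5/2}}\cdot\frac{b^{N-1}}{(b+|u|)^{N}}\,du
\end{equation*}
in two cases. In \emph{Case 1} ($2(j\wedge j')\geq k\wedge k'$, equivalently $a^{2}\leq b$), the kernel in $t-u$ is the tighter one: splitting $|u|\leq|t|/2$ versus $|u|>|t|/2$, using $\int\frac{a^{1/2}}{(a+\sqrt{|s|})^{5/2}}\,ds=O(1)$, and using $b^{1/4}\geq a^{1/2}$ to absorb constants, I expect $I(t)\lesssim \frac{b^{1/4}}{(b+|t|)^{5/4}}$ with $N$ chosen $\geq 5/4$. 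In \emph{Case 2} ($2(j\wedge j')\leq k\wedge k'$, i.e.\ $a^{2}\geq b$), the Schwartz kernel is tighter: for $|u|\lesssim a^{2}$ the factor $(a+\sqrt{|t-u|})^{-5/2}$ is comparable to $(a+\sqrt{|t|})^{-5/2}$ and the remaining $u$-integral is bounded by $\|G_{k,k'}\|_{L^{1}}\lesssim 2^{-|k-k'|}$, while the tail $|u|>a^{2}$ is negligible since the Schwartz bound with large $N$ together with $b\leq a^{2}$ produces an arbitrarily small power. This gives $I(t)\lesssim 2^{-|k-k'|}\cdot \frac{a^{1/2}}{(a+\sqrt{|t|})^{5/2}}$.

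The main technical obstacle is calibrating the exponent splits so that the final exponents $2n+\tfrac12$, $\tfrac54$, and $\tfrac52$ come out exactly as stated; everything else reduces to one Poisson-type convolution estimate in the central variable, with the two regimes distinguished by whether the Heisenberg $t$-scale $a^{2}$ or the Euclidean $t$-scale $b$ dominates.
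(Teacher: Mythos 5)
Your proposal is correct and follows essentially the same route as the paper: both use the centrality of the $t$-variable to factor $\psi_{j,k}\ast K\ast\psi_{j',k'}=(\psi^{(1)}_{j}\ast K\ast\psi^{(1)}_{j'})\ast_{2}(\psi^{(2)}_{k}\ast_{\mathbb{R}}\psi^{(2)}_{k'})$, feed in Lemma \ref{orthogonal} together with the one-dimensional almost orthogonality estimate, and then bound the resulting convolution in the central variable by splitting the exponent $2n+3$ as $(2n+\tfrac12)+\tfrac52$ and using $2^{-(j\wedge j')/2}\le 2^{-(k\wedge k')/4}$ in the regime $2(j\wedge j')\ge k\wedge k'$. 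The only difference is cosmetic: you separate the $z$- and $t$-decay before performing the $u$-integral, reducing everything to the single integral $I(t)$, whereas the paper keeps the joint bound and runs a four-case diagonal/off-diagonal analysis, and your sketched estimates of $I(t)$ in the two regimes do check out.
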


\begin{proof}
We write
\begin{align*}
\psi_{j,k} \ast K \ast \psi_{j', k'}  &= (\psi^{(1)}_{j} \ast_{\mathbb{R}} \psi_{k}^{(2)} ) \ast_{\mathbb{H}^{n}} K
\ast_{\mathbb{H}^{n}} (\psi^{(1)}_{j'} \ast_{\mathbb{R}} \psi^{(2)}_{k'} ) \\
& = (\psi_{j}^{(1)} \ast_{\mathbb{H}^{n} } K \ast_{\mathbb{H}^{n}} \psi^{(1)}_{j'} )
\ast_{\mathbb{R}}  (\psi^{(2)}_{k}\ast_{\mathbb{R}} \psi^{(2)}_{k'}).
\end{align*}
By almost orthogonal estimate on $\mathbb{R}$ we have
\begin{equation*}
|(\psi^{(2)}_{k}\ast_{\mathbb{R}} \psi^{(2)}_{k'}(t)| \lesssim 2^{-|k-k'|}
\frac{2^{-(k \wedge k)}}{(2^{-k\wedge k}+ |t|)^{2}}.
\end{equation*}
Combining this with \eqref{des}, we obtain
\begin{align*}
&|\psi_{j,k} \ast K \ast \psi_{j', k'} (z,t)|\\
 & \lesssim
\int_{\mathbb{R}} | (\psi_{j}^{(1)} \ast_{\mathbb{H}^{n} } K \ast_{\mathbb{H}^{n}} \psi^{(1)}_{j'} ) (z, t-u)|
| (\psi^{(2)}_{k}\ast_{\mathbb{R}} \psi^{(2)}_{k'})(u)|du \\
& \lesssim  2^{-|j-j'|\varepsilon}2^{-|k-k'|} \int_{\mathbb{R}}  \frac{2^{-(j \wedge j')}}{[2^{-(j \wedge j')}
+ (|z|^{2} + |t-u|)^{1/2}]^{2n+3}}
\frac{2^{-(k \wedge k')}}{(2^{-(k \wedge k')}+ |u|)^{2}}du\\
& \sim  2^{-|j-j'|\varepsilon}2^{-|k-k'|} \int_{\mathbb{R}}  \frac{2^{-(j \wedge j')}}{(2^{-2(j \wedge j')}
+ |z|^{2} + |t-u|)^{(n+1) + \frac{1}{2}}}
\frac{2^{-(k \wedge k')}}{(2^{-(k \wedge k')}+ |u|)^{2}}du\\
\end{align*}

{\bf Case 1: } If $2(j \wedge j)  \geq k \wedge k'$ and $|t| \geq 2^{-(k \wedge k')}$, write
\begin{align*}
&\int_{\mathbb{R}}  \frac{2^{-(j \wedge j')}}{(2^{-2(j \wedge j')}
+ |z|^{2} + |t-u|)^{(n+1) + \frac{1}{2}}}
\frac{2^{-(k \wedge k')}}{(2^{-(k \wedge k')}+ |u|)^{2}}du\\
& = \int_{|u| \leq \frac{1}{2}|t|, \mbox{ or  } |u| \geq 2t} + \int_{\frac{1}{2}|t| \leq |u| \leq 2|t|} = I + II.
\end{align*}
It is easy to see that
\begin{align*}
|I| & \lesssim \frac{2^{-(j \wedge j')}}{(2^{-2(j \wedge j')}
+ |z|^{2} + |t|)^{(n+1) + \frac{1}{2}}} \\
& \lesssim \frac{2^{-(j \wedge j')/2}}{(2^{-2(j \wedge j')}
+ |z|^{2} )^{n + \frac{1}{4}}} \frac{2^{-(j \wedge j')/2}}{|t|^{1 + \frac{1}{4}}} \\
& \lesssim \frac{2^{-(j \wedge j')/2}}{(2^{-(j \wedge j')}
+ |z|)^{2n + \frac{1}{2}}} \frac{2^{-(k \wedge k')/4}}{(2^{-k\wedge k'} + |t|)^{1 + \frac{1}{4}}}.
\end{align*}

Next, we estimate
\begin{align*}
|II| &\lesssim \frac{2^{-(k \wedge k')}}{(2^{-(k \wedge k')}+ |t|)^{2}} \int_{\mathbb{R}}  \frac{2^{-(j \wedge j')}}{(2^{-2(j \wedge j')}
+ |z|^{2} + |t-u|)^{(n+1) + \frac{1}{2}}}du \\
& \lesssim \frac{2^{-(k \wedge k')}}{(2^{-(k \wedge k')}+ |t|)^{2}} \int_{\mathbb{R}}
\frac{2^{-(j \wedge j')/2 }}{(2^{-2(j \wedge j')}
+ |z|^{2} )^{n  + \frac{1}{4}}}
\frac{ 2^{-(j \wedge j')/2  }}{(2^{-2(j \wedge j')}
+  |t-u|)^{1 +\frac{1}{4}} }du \\
& \lesssim \frac{2^{-(j \wedge j')/2 }}{(2^{-(j \wedge j')}
+ |z|)^ {2n  + \frac{1}{2}}} \frac{2^{-(k \wedge k')/4}}{(2^{-(k \wedge k')}+ |t|)^{1+\frac{1}{4}}}.
\end{align*}

{\bf Case 2:} If $2 (j \wedge j') \geq k \wedge k'$ and $|t | \leq 2^{-(k \wedge k')}$, then
\begin{align*}
 &\int_{\mathbb{R}}  \frac{2^{-(j \wedge j')}}{(2^{-2(j \wedge j')}
+ |z|^{2} + |t-u|)^{(n+1) + \frac{1}{2}}}
\frac{2^{-(k \wedge k')}}{(2^{-(k \wedge k')}+ |u|)^{2}}du \\
& \lesssim \frac{1}{2^{-(k \wedge k')}} \int_{\mathbb{R}}  \frac{2^{-(j \wedge j')}}{(2^{-2(j \wedge j')}
+ |z|^{2} + |t-u|)^{(n+1) + \frac{1}{2}}}du \\
& \lesssim \frac{2^{-(k \wedge k')/4}}{(2^{-(k \wedge k')}+|t|)^{1+\frac{1}{4}}} \frac{2^{-(j \wedge j')/2 }}{(2^{-(j \wedge j')}
+ |z|)^ {2n  + \frac{1}{2}}}.
\end{align*}

{\bf Case 3:} We now consider the case $2(j \wedge j') \leq k\wedge k'$
and $|t| \leq 2^{-2(j \wedge j')}$. Then
\begin{align*}
&\int_{\mathbb{R}}  \frac{2^{-(j \wedge j')}}{(2^{-2(j \wedge j')}
+ |z|^{2} + |t-u|)^{(n+1) + \frac{1}{2}}}
\frac{2^{-(k \wedge k')}}{(2^{-(k \wedge k')}+ |u|)^{2}}du \\
&\lesssim \frac{2^{-(j \wedge j')}}{(2^{-2(j \wedge j')}
+ |z|^{2})^{(n+1) + \frac{1}{2}}}  \\
& \lesssim  \frac{2^{-(j \wedge j')/2}}{(2^{-2(j \wedge j')}
+ |z|^{2} )^{n+ \frac{1}{4}}}  \frac{2^{-(j \wedge j')/2}}{(2^{-2(j \wedge j')}
+ |t|)^{1+ \frac{1}{4}}} \\
&\sim  \frac{2^{-(j \wedge j')/2}}{(2^{-(j \wedge j')}
+ |z| )^{2n+ \frac{1}{2}}}  \frac{2^{-(j \wedge j')/2}}{(2^{-(j \wedge j')}
+ \sqrt{|t|})^{2+ \frac{1}{2}}}.
\end{align*}

{\bf Case 4:}  If $2(j \wedge j') \leq k\wedge k'$ and $|t | \geq 2^{-2(j \wedge j')}$, write
\begin{align*}
&\int_{\mathbb{R}}  \frac{2^{-(j \wedge j')}}{(2^{-2(j \wedge j')}
+ |z|^{2} + |t-u|)^{(n+1) + \frac{1}{2}}}
\frac{2^{-(k \wedge k')}}{(2^{-(k \wedge k')}+ |u|)^{2}}du\\
& = \int_{|u| \leq \frac{1}{2}|t|, \mbox{ or  } |u| \geq 2t} + \int_{\frac{1}{2}|t| \leq |u| \leq 2|t|} = I + II.
\end{align*}
It is easy to see that
\begin{align*}
|I| \lesssim &  \frac{2^{-(j \wedge j')}}{(2^{-2(j \wedge j')}
+ |z|^{2} + |t|)^{(n+1) + \frac{1}{2}}} \\
& \lesssim  \frac{2^{-(j \wedge j')/2}}{(2^{-2(j \wedge j')}
+ |z|^{2})^{n+ \frac{1}{4}}}  \frac{2^{-(j \wedge j')/2}}{(2^{-2(j \wedge j')}
+ |t|)^{1  + \frac{1}{4}}} \\
& \sim  \frac{2^{-(j \wedge j')/2}}{(2^{-(j \wedge j')}
+ |z|)^{2n+ \frac{1}{2}}}  \frac{2^{-(j \wedge j')/2}}{(2^{-(j \wedge j')}
+ \sqrt{|t|})^{2  + \frac{1}{2}}} .
\end{align*}

To estimate $II$, we have
\begin{align*}
|II| &\lesssim  \frac{2^{-(k \wedge k')}}{(2^{-(k \wedge k')}+ |t|)^{2}} \int_{\mathbb{R}}  \frac{2^{-(j \wedge j')}}{(2^{-2(j \wedge j')}
+ |z|^{2} + |t-u|)^{(n+1) + \frac{1}{2}}}du\\
& \lesssim \frac{2^{-(k \wedge k')}}{(2^{-(k \wedge k')}+ |t|)^{2}} \int_{\mathbb{R}}
\frac{2^{-(j \wedge j')/2 }}{(2^{-2(j \wedge j')}
+ |z|^{2} )^{n  + \frac{1}{4}}}
\frac{ 2^{-(j \wedge j')/2  }}{(2^{-2(j \wedge j')}
+  |t-u|)^{1 +\frac{1}{4}} }du \\
& \lesssim \frac{2^{-(j \wedge j')/2 }}{(2^{-2(j \wedge j')}
+ |z|^{2} )^{n  + \frac{1}{4}}} \frac{2^{-2(j \wedge j')}}{(2^{-2(j \wedge j')}+ |t|)^{2}} \\
& \sim \frac{2^{-(j \wedge j')/2 }}{(2^{-(j \wedge j')}
+ |z|)^{2n  + \frac{1}{2}}} \frac{2^{-(j \wedge j')/2}}{(2^{-(j \wedge j')}
+ \sqrt{|t|})^{2  + \frac{1}{2}}}.
\end{align*}

This finishes the proof.
\end{proof}

Now we prove the following new version of discrete Calderon's reproducing formula.

\begin{theorem}\label{Calderon new}
Suppose $0<p\leq 1$. For any given $f\in L^2(\mathbb{H}^n)\cap H^p_{flag}(\mathbb{H}^n)$, there exists $h\in L^2(\mathbb{H}^n)\cap H^p_{flag}(\mathbb{H}^n)$ such that, for a sufficiently large integer $N \in\mathbb{N}$,
\begin{equation}\label{eq:1}
f(z,u)=\sum_{j,k\in \mathbb{Z}}\sum_{\substack{R=I\times J,\\ \ell(I)=2^{-j-N},\\\ell(J)=2^{-j-N}+2^{-k-N} }}|R| \widetilde{\psi}_{j,k}((z,u)\circ(z_I,u_J)^{-1}) (\psi_{j,k}*h)(z_I,u_J),
\end{equation}
where the series converges in $L^2(\mathbb{H}^n)$ and $z_I,u_J$ are any fixed points in $I, J$, respectively. Moreover,
\begin{equation}\label{eq:2}
    \|f\|_{H^p_{flag}(\mathbb{H}^n)}\approx\|h\|_{H^p_{flag}(\mathbb{H}^n)},\ \  \|f\|_{L^2(\mathbb{H}^n)}\approx\|h\|_2.
\end{equation}
\end{theorem}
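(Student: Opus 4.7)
The plan is to start from the continuous Calder\'on reproducing formula \eqref{CRF st}, discretize it at the level of dyadic dilations $s = 2^{-j}$, $t = 2^{-k}$ and further at the level of sampling points on rectangles $R = I \times J$ with $\ell(I) = 2^{-j-N}$, $\ell(J) = 2^{-j-N}+2^{-k-N}$, and then use the almost orthogonality of Lemmas \ref{almost ortho psi phi}--\ref{lemma orth} to show that the error introduced by discretization is small when $N$ is large. The decomposition $f = T_N f + R_N f$ will hold, where
\begin{equation*}
T_N(h)(z,u) := \sum_{j,k\in\mathbb{Z}}\sum_{\substack{R=I\times J\\ \ell(I)=2^{-j-N}\\ \ell(J)=2^{-j-N}+2^{-k-N}}}|R|\,\widetilde{\psi}_{j,k}\bigl((z,u)\circ(z_I,u_J)^{-1}\bigr)(\psi_{j,k}*h)(z_I,u_J).
\end{equation*}
Writing $f = T_N f + R_N f$ and showing $\|R_N\| < 1$ on both $L^2(\mathbb{H}^n)$ and $H^p_{flag}(\mathbb{H}^n)$ for $N$ large, we may invert $T_N = I - R_N$ by a Neumann series and set $h := T_N^{-1} f = \sum_{m\geq 0} R_N^m f$; this $h$ will automatically satisfy the required norm equivalences \eqref{eq:2}.

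The concrete steps I would carry out are as follows. First, replace the continuous integrals $\int \frac{ds}{s}\frac{dt}{t}$ in \eqref{CRF st} by Riemann sums over the dyadic shells $s\in[2^{-j-1},2^{-j}]$, $t\in[2^{-k-1},2^{-k}]$ and absorb the resulting error into $R_N f$; this is routine since $\psi^{(1)}$, $\psi^{(2)}$ are smooth with cancellation. Second, write
\begin{equation*}
\widetilde{\psi}_{j,k}*\psi_{j,k}*f(z,u)=\sum_{R=I\times J}\int_R \widetilde{\psi}_{j,k}\bigl((z,u)\circ g^{-1}\bigr)\,(\psi_{j,k}*f)(g)\,dg,
\end{equation*}
and replace the integrand by its value at a fixed point $(z_I,u_J)\in R$. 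The discretization error on each rectangle is controlled by the variation of $g\mapsto \widetilde{\psi}_{j,k}((z,u)\circ g^{-1})(\psi_{j,k}*f)(g)$ over $R$, which gains a factor of $2^{-N\varepsilon}$ from smoothness of $\psi^{(1)},\psi^{(2)}$ together with the mean value theorem along the flag scale. This yields the identity $f = T_N f + R_N f$ with a quantitative error kernel.

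Next, I would bound $R_N$. The key tool is Lemma \ref{lemma orth}: composing two scales $(\psi_{j,k},\psi_{j',k'})$ produces the double decay $2^{-|j-j'|\varepsilon}2^{-|k-k'|}$ times the anisotropic flag kernel. Combining this with the $N$-gain from Riemann/sampling discretization gives $|R_N|\lesssim 2^{-N\varepsilon}$ at the level of the almost-diagonal matrix indexed by rectangles. For the $L^2$ bound, a Cotlar--Stein argument (or summing a Schur-type estimate against the almost diagonal kernel from Lemma \ref{lemma orth}) produces $\|R_N\|_{L^2\to L^2}\lesssim 2^{-N\varepsilon}$. For the $H^p_{flag}$ bound, the square function $S_{flag}(R_N f)$ at a rectangle $R'$ is estimated by summing the almost orthogonal kernel against $|\psi_{j,k}*f(z_I,u_J)|$ over all source rectangles $R$, using H\"older's inequality (for $p\leq 1$, after raising to power $r$ with $\tfrac{4n}{4n+1}<r<p$) and the Fefferman--Stein vector-valued maximal inequality to pass from sampled values to the square function of $f$; the two-parameter strong maximal function $M_s$ on $\mathbb{H}^n$ adapted to the flag structure is exactly what the shape of the kernel in Lemma \ref{lemma orth} allows. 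Both estimates again come with an overall factor $2^{-N\varepsilon}$.

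Finally, choose $N$ so large that $\|R_N\|_{L^2\to L^2} + \|R_N\|_{H^p_{flag}\to H^p_{flag}} < 1/2$. Then $T_N = I - R_N$ is invertible on both spaces, $h := T_N^{-1} f$ lies in $L^2\cap H^p_{flag}$, the identity \eqref{eq:1} holds with convergence in $L^2$ (and may be upgraded to convergence in $H^p_{flag}$ if desired), and \eqref{eq:2} follows from $\|T_N^{\pm 1}\|\sim 1$. The main obstacle is the $H^p_{flag}$ bound on the error operator $R_N$: one must combine the two-parameter almost orthogonality of Lemma \ref{lemma orth} with a flag-adapted maximal function estimate in the range $\tfrac{4n}{4n+1}<p\leq 1$, which is precisely where the homogeneous dimension $2n+2$ of the $z$-factor together with $\tfrac12$-H\"older smoothness in $|z|$ dictates the lower threshold on $p$ appearing in Theorem \ref{main theorem}.
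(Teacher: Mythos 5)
Your proposal is correct in its skeleton and follows the same overall route as the paper: discretize the continuous formula \eqref{CRF st} first in scale and then in space to get $f=T_Nf+R_Nf$, show the error operator is small, invert $T_N=I-R_N$ by a Neumann series, and set $h=T_N^{-1}f$, from which \eqref{eq:1} and \eqref{eq:2} follow. The difference lies in how the smallness of $R_N$ on $H^p_{flag}$ is established. The paper does not re-derive this by a direct square-function computation; it quotes from \cite{HLS,HLS2} the bounds $\Vert R_\alpha f\Vert+\Vert R^{(1)}_{\alpha,N}f\Vert+\Vert R^{(2)}_{\alpha,N}f\Vert\le C2^{-N}\Vert f\Vert$ both on $L^p$, $1<p<\infty$, and on the molecular space $\mathcal{M}^{M'+\delta}_{flag}(\mathbb{H}^n)$, and then proves the general Proposition \ref{boundedness on molecular} ($L^2$ plus molecular-space boundedness implies $H^p_{flag}$ boundedness, with the expected operator-norm control), whose proof is the Plancherel--P\'olya/Fefferman--Stein argument you sketch. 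So your direct estimate is morally the same machinery, just organized without the intermediate proposition; what the paper's organization buys is that the $2^{-N}$ gain is imported wholesale from \cite{HLS,HLS2} rather than re-proved. Two points in your write-up need repair. First, you invoke Lemma \ref{lemma orth} to bound $R_N$, but that lemma concerns $\psi_{j,k}\ast K\ast\psi_{j',k'}$ with the singular kernel $K$ in the middle; no $K$ appears in the error operator of the reproducing formula, and what is actually needed is the $K$-free flag almost-orthogonality estimate for $\psi_{j,k}\ast\widetilde{\psi}_{j',k'}$ (Lemma 6 of \cite{HLS,HLS2}), which the paper uses inside Proposition \ref{boundedness on molecular}. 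Second, because no $K$ is present, the decay in that $K$-free estimate can be taken arbitrarily large (the moments $M$ of $\psi^{(1)},\psi^{(2)}$ are arbitrary), so there is no reason to restrict to $\tfrac{4n}{4n+1}<r<p$; as written, your argument would only yield Theorem \ref{Calderon new} for $p>\tfrac{4n}{4n+1}$, whereas the statement (and the paper's proof) covers all $0<p\le 1$ --- the threshold $\tfrac{4n}{4n+1}$ belongs to Theorem \ref{main theorem}, where the limited regularity of $K$ caps the decay. Both issues are fixable without changing your strategy.
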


\begin{proof}
Following \cite{HLS}(see also \cite{HLS2}) and beginning with the Calder\'on reproducing 
formula in \eqref{CRF st} that holds for $f\in L^2(\mathbb{H}^n)$ and converges in $L^2(\mathbb{H}^n),$ for any given $\alpha >0,$ we discretize
\eqref{CRF st} as follows:%

\begin{eqnarray*}
f\left( z,u\right) &=&\int_{0}^{\infty }\int_{0}^{\infty }\widetilde{\psi}%
_{s,t}\ast _{\mathbb{H}^{n}}\psi _{s,t}\ast _{\mathbb{H}^{n}}f\left(
z,u\right) \frac{ds}{s}\frac{dt}{t} \\
&=&\sum_{j,k\in \mathbb{Z}}\int_{2^{-\alpha \left( j+1\right) }}^{2^{-\alpha
j}}\int_{2^{-2\alpha \left( k+1\right) }}^{2^{-2\alpha k}}\widetilde{\psi}%
_{s,t}\ast \psi _{s,t}\ast f\left( z,u\right) \frac{dt}{t}\frac{ds}{s} \\
&=&c_{\alpha }\sum_{j\leq k}\widetilde{\psi}_{j,k}\ast \psi _{j,k}\ast f\left(
z,u\right) +c_{\alpha }\sum_{j>k}\widetilde{\psi}_{j,k}\ast \psi _{j,k}\ast
f\left( z,u\right) \\
&&+\sum_{j,k\in \mathbb{Z}}\int_{2^{-\alpha \left( j+1\right) }}^{2^{-\alpha
j}}\int_{2^{-2\alpha \left( k+1\right) }}^{2^{-2\alpha k}}\left\{ \widetilde{\psi%
}_{s,t}\ast \psi _{s,t}-\widetilde{\psi}_{j,k}\ast \psi _{j,k}\right\} \ast
f\left( z,u\right) \frac{dt}{t}\frac{ds}{s} \\
&=:&T_{\alpha }^{\left( 1\right) }f\left( z,u\right) +T_{\alpha }^{\left(
2\right) }f\left( z,u\right) +R_{\alpha }f\left( z,u\right) ,
\end{eqnarray*}%
where%
\begin{eqnarray*}
\psi _{j,k} &=&\psi _{2^{-\alpha j},2^{-2\alpha k}}, \\
c_{\alpha } &=&\int_{2^{-\alpha \left( j+1\right) }}^{2^{-\alpha
j}}\int_{2^{-2\alpha \left( k+1\right) }}^{2^{-2\alpha k}}\frac{dt}{t}\frac{%
ds}{s}=\ln \frac{2^{-\alpha j}}{2^{-\alpha \left( j+1\right) }}\ln \frac{%
2^{-2\alpha k}}{2^{-2\alpha \left( k+1\right) }}=2\left( \alpha \ln 2\right)
^{2}.
\end{eqnarray*}

We further discretize the terms $T_{\alpha }^{\left( 1\right) }f\left(
z,u\right) $ and $T_{\alpha }^{\left( 2\right) }f\left( z,u\right) $ in
different ways, exploiting the one-parameter structure of the Heisenberg
group for $T_{\alpha }^{\left( 1\right) }$, and exploiting the implicit
product structure for $T_{\alpha }^{\left( 2\right) }$. More precisely,
\begin{eqnarray*}
T_{\alpha }^{\left( 1\right) }f\left( z,u\right) &=&\sum_{j\leq k}\sum_{%
\mathcal{Q}\in \mathsf{Q}\left( j\right) }f_{\mathcal{Q}}\psi _{\mathcal{Q}%
}\left( z,u\right) +R_{\alpha ,N}^{\left( 1\right) }f\left( z,u\right) , \\
T_{\alpha }^{\left( 2\right) }f\left( z,u\right) &=&\sum_{j>k}\sum_{\mathcal{%
R}\in \mathsf{R}\left( j,k\right) }f_{\mathcal{R}}\psi _{\mathcal{R}}\left(
z,u\right) +R_{\alpha ,N}^{\left( 2\right) }f\left( z,u\right) ,
\end{eqnarray*}%
where%
\begin{eqnarray*}
f_{\mathcal{Q}} &\equiv &c_{\alpha }\left\vert \mathcal{Q}\right\vert \ \psi
_{j,k}\ast f\left( z_{\mathcal{Q}},u_{\mathcal{Q}}\right) ,\ \ \ \ \ \text{%
for }\mathcal{Q}\in \mathsf{Q}\left( j\right) \text{ and }k\geq j, \\
f_{\mathcal{R}} &\equiv &c_{\alpha }\left\vert \mathcal{R}\right\vert \ \psi
_{j,k}\ast f\left( z_{\mathcal{R}},u_{\mathcal{R}}\right) ,\ \ \ \ \ \text{%
for }\mathcal{R}\in \mathsf{R}\left( j,k\right) \text{ and }k<j, \\
\psi _{\mathcal{Q}}\left( z,u\right) &=&\frac{1}{\left\vert \mathcal{Q}%
\right\vert }\int_{\mathcal{Q}}\widetilde{\psi}_{j,k}\left( \left( z,u\right)
\circ \left( z^{\prime },u^{\prime }\right) ^{-1}\right) dz^{\prime
}du^{\prime },\ \ \ \ \ \text{for }\mathcal{Q}\in \mathsf{Q}\left( j\right)
\text{ and }k\geq j, \\
\psi _{\mathcal{R}}\left( z,u\right) &=&\frac{1}{\left\vert \mathcal{R}%
\right\vert }\int_{\mathcal{R}}\widetilde{\psi}_{j,k}\left( \left( z,u\right)
\circ \left( z^{\prime },u^{\prime }\right) ^{-1}\right) dz^{\prime
}du^{\prime },\ \ \ \ \ \text{for }\mathcal{R}\in \mathsf{R}\left(
j,k\right) \text{ and }k<j.
\end{eqnarray*}%
and%
\begin{eqnarray*}
R_{\alpha ,N}^{\left( 1\right) }f\left( z,u\right) &=&c_{\alpha }\sum_{j\leq
k}\sum_{\mathcal{Q}\in \mathsf{Q}\left( j\right) }\int_{\mathcal{Q}}\widetilde{%
\psi}_{j,k}\left( \left( z,u\right) \circ \left( z^{\prime },u^{\prime
}\right) ^{-1}\right) \\
&&\times \left[ \psi _{j,k}\ast f\left( z^{\prime },u^{\prime }\right) -\psi
_{j,k}\ast f\left( z_{\mathcal{Q}},u_{\mathcal{Q}}\right) \right] dz^{\prime
}du^{\prime }, \\
R_{\alpha ,N}^{\left( 2\right) }f\left( z,u\right) &=&c_{\alpha
}\sum_{j>k}\sum_{\mathcal{R}\in \mathsf{R}\left( j,k\right) }\int_{\mathcal{R%
}}\widetilde{\psi}_{j,k}\left( \left( z,u\right) \circ \left( z^{\prime
},u^{\prime }\right) ^{-1}\right) \\
&&\times \left[ \psi _{j,k}\ast f\left( z^{\prime },u^{\prime }\right) -\psi
_{j,k}\ast f\left( z_{\mathcal{R}},u_{\mathcal{R}}\right) \right] dz^{\prime
}du^{\prime }.
\end{eqnarray*}

Altogether we have%
\begin{eqnarray}
f\left( z,u\right) &=&\sum_{j\in \mathbb{Z}}\sum_{\mathcal{Q}\in \mathsf{Q}%
\left( j\right) }f_{\mathcal{Q}}\psi _{\mathcal{Q}}\left( z,u\right)
+\sum_{j>k}\sum_{\mathcal{R}\in \mathsf{R}\left( j,k\right) }f_{\mathcal{R}%
}\psi _{\mathcal{R}}\left( z,u\right)  \label{altogether} \\
&&+\left\{ R_{\alpha }f\left( z,u\right) +R_{\alpha ,N}^{\left( 1\right)
}f\left( z,u\right) +R_{\alpha ,N}^{\left( 2\right) }f\left( z,u\right)
\right\} .  \notag
\end{eqnarray}%
Recall that we denote by $\mathsf{Q}\equiv \bigcup_{j\in \mathbb{Z}}\mathsf{Q%
}\left( j\right) $ the collection of \emph{all} dyadic cubes, and by $%
\mathsf{R}_{vert}\equiv \bigcup_{j>k}\mathsf{R}\left( j,k\right) $ the
collection of \emph{all strictly vertical} dyadic rectangles. Finally, we can
rewrite the right-hand side of the equality (\ref{altogether}) as%
\begin{align}
f\left( z,u\right) &=\bigg(\sum_{\mathcal{Q}\in \mathsf{Q}}f_{\mathcal{Q}}\psi _{%
\mathcal{Q}}\left( z,u\right) +\sum_{\mathcal{R}\in \mathsf{R}_{vert}}f_{%
\mathcal{R}}\psi _{\mathcal{R}}\left( z,u\right)\bigg) +\left\{ R_{\alpha
}+R_{\alpha ,N}^{\left( 1\right) }+R_{\alpha ,N}^{\left( 2\right) }\right\}
(f)\left( z,u\right)   \label{altogether'}\\
&=: T_N(f)+R_N(f),\nonumber
\end{align}%
where the series converge in the norm of $L^2(\mathbb{H}^n).$

It was proved in \cite{HLS, HLS2} that
\begin{eqnarray*}
&&\left\Vert R_{\alpha }f\right\Vert _{L^{p}\left( \mathbb{H}^{n}\right)
}+\big\Vert R_{\alpha ,N}^{\left( 1\right) }f\big\Vert _{L^{p}\left(
\mathbb{H}^{n}\right) }+\big\Vert R_{\alpha ,N}^{\left( 2\right)
}f\big\Vert _{L^{p}\left( \mathbb{H}^{n}\right) }  \label{error op bounds}
 \leq C2^{-N}\left\Vert f\right\Vert _{L^{p}\left(
\mathbb{H}^{n}\right) }\\[3pt]
&&\ \ \ \  \textup{for all } f\in L^{p}\left( \mathbb{H}^{n}\right),
1<p<\infty ,  \notag \\[7pt]
&&\left\Vert R_{\alpha }f\right\Vert _{\mathcal{M}_{flag}^{M^{\prime
}+\delta }\left( \mathbb{H}^{n}\right) }+\big\Vert R_{\alpha ,N}^{(
1) }f\big\Vert _{\mathcal{M}_{flag}^{M^{\prime }+\delta }\left(
\mathbb{H}^{n}\right) }+\big\Vert R_{\alpha ,N}^{\left( 2\right)
}f\big\Vert _{\mathcal{M}_{flag}^{M^{\prime }+\delta }\left( \mathbb{H}%
^{n}\right) }   \leq C2^{-N}\left\Vert f\right\Vert _{\mathcal{M}%
_{flag}^{M^{\prime }+\delta }\left( \mathbb{H}^{n}\right) }\notag \\
&&\ \ \ \ \textup{for all } f\in
\mathcal{M}_{flag}^{M^{\prime }+\delta }\left( \mathbb{H}^{n}\right) .
\notag
\end{eqnarray*}
Thus, we have
$$\Big\|\left\{ R_{\alpha
}+R_{\alpha ,N}^{\left( 1\right) }+R_{\alpha ,N}^{\left( 2\right) }\right\}(f)\Big\|_{L^2(\mathbb{H}^{n})}\leq C2^{-N}\|f\|_{L^2(\mathbb{H}^{n})}.$$
Next
we claim that
\begin{eqnarray}\label{claim 3}
\Big\|\left\{ R_{\alpha
}+R_{\alpha ,N}^{\left( 1\right) }+R_{\alpha ,N}^{\left( 2\right) }\right\}(f)\Big\|_{H^p_{flag}(\mathbb{H}^{n})}\leq C2^{-N}\|f\|_{H^p_{flag}(\mathbb{H}^{n})}.
\end{eqnarray}

Indeed, the above claim follows from the following general result:
\begin{proposition}\label{boundedness on molecular}
If $T$ is a bounded operator on $L^2(\mathbb{H}^n)$ and molecular space $\mathcal{M}_{flag}^{M^{\prime }+\delta } (\mathbb{H}^n),$ then $T$ is bounded on $H^p_{flag}.$ Moreover,
$$\|T(f)\|_{H^p_{flag}}\leq C\Big(\|T\|_{2,2}+\|T\|_{\mathcal{M}_{flag}^{M^{\prime }+\delta },\mathcal{M}_{flag}^{M^{\prime }+\delta }}\Big)\|f\|_{H^p_{flag}},$$
where we denote $\|T\|_{2,2}$ for the operator norm of $T$ on $L^2(\mathbb{H}^{n})$ and $\|T\|_{\mathcal{M}_{flag}^{M^{\prime }+\delta } , \mathcal{M}_{flag}^{M^{\prime }+\delta } }$ for the operator norm on the molecular space $\mathcal{M}_{flag}^{M^{\prime }+\delta }.$
\end{proposition}
Proposition \ref{boundedness on molecular} follows from the discrete Caldero\'n's reproducing formula \eqref{HLS discrete formula}  (Theorem 3 in \cite{HLS}) and the almost orthogonality estimates (Lemma 6 in \cite{HLS}).
We only give an outline of the proof.

Suppose $f\in L^2(\mathbb{H}^{n})\cap H^p_{flag}(\mathbb{H}^{n})$. By \eqref{HLS discrete formula}, it follows that
$$T(f)\left( z,u\right) =\sum_{\mathcal{Q}\in \mathsf{Q}%
}f_{\mathcal{Q}}T({\Psi} _{\mathcal{Q}})\left( z,u\right)
+\sum_{\mathcal{R}\in {\mathsf{R}_{vert}}}f_{\mathcal{R}%
}T({\Psi} _{\mathcal{R}})\left( z,u\right).$$
Thus,
\begin{eqnarray*}
\Vert Tf\Vert _{H_{flag}^{p}}^p&=&\Vert S_{flag}(Tf)\Vert _{p}^p\\
&\leq&\bigg\|\bigg\{ \sum_{\mathcal{Q}\in \mathsf{Q}}\left\vert \psi^{\prime} _{%
\mathcal{Q}}\ast Tf\left( z_{\mathcal{Q}},u_{\mathcal{Q}}\right)
\right\vert ^{2}\chi _{\mathcal{Q}}\left( z,u\right) \bigg\} ^{{\frac{{1}}{{2}}}} \bigg\|_p^p \\
&&\quad+  \bigg\|\bigg\{\sum_{\mathcal{R}\in
\mathsf{R}_{vert}}\left\vert \psi^{\prime} _{\mathcal{R}}\ast Tf\left( z_{%
\mathcal{R}},u_{\mathcal{R}}\right) \right\vert ^{2}\chi _{\mathcal{R}%
}\left( z,u\right) \bigg\} ^{{\frac{{1}}{{2}}}}\bigg\|_p^p\\
&\leq&\bigg\|\bigg\{ \sum_{\mathcal{Q}\in \mathsf{Q}}\Big\vert \psi^{\prime} _{%
\mathcal{Q}}\ast \sum_{\mathcal{Q}'\in \mathsf{Q}%
}f_{\mathcal{Q}'}T({\Psi} _{\mathcal{Q}'})\left( z_{\mathcal{Q}},u_{\mathcal{Q}}\right)
\Big\vert ^{2}\chi _{\mathcal{Q}}\left( z,u\right) \bigg\} ^{{\frac{{1}}{{2}}}} \bigg\|_p^p\\
&&\hskip.5cm +  \bigg\|\bigg\{\sum_{\mathcal{R}\in
\mathsf{R}_{vert}}\Big\vert \psi^{\prime} _{\mathcal{R}}\ast \sum_{\mathcal{Q}'\in \mathsf{Q}%
}f_{\mathcal{Q}'}T({\Psi} _{\mathcal{Q}'})\left( z_{%
\mathcal{R}},u_{\mathcal{R}}\right) \Big\vert ^{2}\chi _{\mathcal{R}%
}\left( z,u\right) \bigg\} ^{{\frac{{1}}{{2}}}}\bigg\|_p^p\\
&&\hskip.6cm+\bigg\|\bigg\{ \sum_{\mathcal{Q}\in \mathsf{Q}}\Big\vert \psi^{\prime} _{%
\mathcal{Q}}\ast \sum_{\mathcal{R}'\in {\mathsf{R}_{vert}}}f_{\mathcal{R}'%
}T({\Psi} _{\mathcal{R}'})\left( z_{\mathcal{Q}},u_{\mathcal{Q}}\right)
\Big\vert ^{2}\chi _{\mathcal{Q}}\left( z,u\right) \bigg\} ^{{\frac{{1}}{{2}}}} \bigg\|_p^p\\
&&\hskip.7cm +  \bigg\|\bigg\{\sum_{\mathcal{R}\in
\mathsf{R}_{vert}}\Big\vert \psi^{\prime} _{\mathcal{R}}\ast \sum_{\mathcal{R}'\in {\mathsf{R}_{vert}}}f_{\mathcal{R}'%
}T({\Psi} _{\mathcal{R}'})\left( z_{%
\mathcal{R}},u_{\mathcal{R}}\right) \Big\vert ^{2}\chi _{\mathcal{R}%
}\left( z,u\right) \bigg\} ^{{\frac{{1}}{{2}}}}\bigg\|_p^p\\
&=:& A_1+A_2+A_3+A_4.
\end{eqnarray*}
To estimate the term $A_1,$ note that
$${\Psi} _{\mathcal{Q}'}\left( z,u\right) =\frac{1}{\left\vert \mathcal{Q}'%
\right\vert }\int_{\mathcal{Q}'}\widetilde{\psi}_{j',k'}\left( \left( z,u\right)
\circ \left( z^{\prime },u^{\prime }\right) ^{-1}\right) dz^{\prime
}du^{\prime }.$$
We have
\begin{eqnarray*}
A_1=\bigg\|\bigg\{ \sum_{\mathcal{Q}\in \mathsf{Q}}\bigg\vert \sum_{\mathcal{Q}'\in \mathsf{Q}%
}f_{\mathcal{Q}'}\ \frac{1}{\left\vert \mathcal{Q}'%
\right\vert }\int_{\mathcal{Q}'} \psi^{\prime} _{%
\mathcal{Q}}\ast T \widetilde{\psi}_{j',k'}\left( \left( z_{\mathcal{Q}},u_{\mathcal{Q}}\right)
\circ \left( z^{\prime },u^{\prime }\right) ^{-1}\right) dz^{\prime
}du^{\prime }
\bigg\vert ^{2}\chi _{\mathcal{Q}}\left( z,u\right) \bigg\} ^{{\frac{{1}}{{2}}}} \bigg\|_p^p\\
\end{eqnarray*}
Since $T$ is bounded on the molecular space $\mathcal{M}_{flag}^{M^{\prime }+\delta } (\mathbb{H}^n),$ we obtain that
$T \widetilde{\psi}_{j',k'}$ satisfies the same conditions as $\widetilde{\psi}_{j',k'}$ does with an extra constant
$\|T\|_{\mathcal{M}_{flag}^{M^{\prime }+\delta } (\mathbb{H}^n),\mathcal{M}_{flag}^{M^{\prime }+\delta } (\mathbb{H}^n)}$.
Thus, by Lemma 6 in \cite{HLS2}, we have
\begin{align*}
&\Big|\psi^{\prime} _{%
\mathcal{Q}}\ast T \widetilde{\psi}_{j',k'}\left( \left( z_{\mathcal{Q}},u_{\mathcal{Q}}\right)
\circ \left( z^{\prime },u^{\prime }\right) ^{-1}\right)\Big| \\
&\lesssim
\begin{cases}
\|T\|_{\mathcal{M}_{flag}^{M^{\prime }+\delta },\mathcal{M}_{flag}^{M^{\prime }+\delta }}2^{-|j-j'|\varepsilon}2^{-|k-k'|}  \frac{\displaystyle 2^{-(j \wedge j')/2}}{\displaystyle (2^{-(j \wedge j')}
+ |z_{\mathcal{Q}}-z'|)^{2n + \frac{1}{2}}}\frac{\displaystyle 2^{-(k \wedge k')/4}}{\displaystyle (2^{-k\wedge k'} + |u_{\mathcal{Q}}-u'|)^{1 + \frac{1}{4}}}\\[13pt]
&\hskip-4cm\mbox{if  } 2(j \wedge j') \geq k \wedge k'; \\[12pt]
\|T\|_{\mathcal{M}_{flag}^{M^{\prime }+\delta },\mathcal{M}_{flag}^{M^{\prime }+\delta }} 2^{-|j-j'|\varepsilon}2^{-|k-k'|}\frac{\displaystyle 2^{-(j \wedge j')/2}}{\displaystyle (2^{-(j \wedge j')}
+ |z_{\mathcal{Q}}-z'|)^{2n+ \frac{1}{2}}}\frac{\displaystyle 2^{-(j \wedge j')/2}}{\displaystyle (2^{-(j \wedge j')}
+ \sqrt{|u_{\mathcal{Q}}-u'|})^{2  + \frac{1}{2}}}\\[13pt]
  &\hskip-4cm \mbox{if } 2(j \wedge j') \leq k \wedge k'.
\end{cases}
\end{align*}

Then following the same steps as in the proof of Plancherel--P\'olya inequalities for the Hardy spaces $H^p_{flag}(\mathbb{H}^n)$ (see Theorem 4 in \cite{HLS2}),
we obtain that
$$ A_1 \leq C\Big(\|T\|_{2,2}+\|T\|_{\mathcal{M}_{flag}^{M^{\prime }+\delta },\mathcal{M}_{flag}^{M^{\prime }+\delta }}\Big)^p\|f\|^p_{H^p_{flag}(\mathbb{H}^n)}. $$
Similarly we can estimate the terms $A_2, A_3$ and $A_4$. We leave the details to the reader.

Now by Proposition \ref{boundedness on molecular} we obtain that the claim \eqref{claim 3} holds, which implies that
$$\|R_N(f)\|_{H^p_{flag}(\mathbb{H}^n)}\leq  C2^{-N}\|f\|_{H^p_{flag}(\mathbb{H}^n)}.$$ Thus, choosing $N$ large enough implies that
$T_N$ is invertible and $T_N^{-1}$ is bounded on $H^p_{flag}(\mathbb{H}^n)$.
Set $h=T_{\alpha,N}^{-1}f$. Then
\begin{eqnarray*}
f(x,y)&=& T_{\alpha,N} (T_{\alpha,N}^{-1}f)  \\
&=&\sum_{j,k\in \mathbb{Z}}\sum_{\substack{R=I\times J,\\\ell(I)=2^{-j-N},\\\ell(J)=2^{-j-N}+2^{-k-N} }}|R| \widetilde{\psi}_{j,k}((x,y)\circ(x_I,y_J)^{-1}) (\psi_{j,k}*h)(x_I,y_J).
\end{eqnarray*}
\end{proof}

We now return to Theorem \ref{main theorem}.
\begin{proof}[Proof of Theorem \ref{main theorem}]
We first verify \eqref{flag}. To this end,
applying  the discrete version of the reproducing
formula \eqref{eq:1} for $f$ in the term $\psi^{\prime} _{\mathcal{R}}\ast T(f)\left( z_{
\mathcal{R}},u_{\mathcal{R}}\right)$ given in \eqref{flag} implies that
\begin{eqnarray*}
&&\psi^{\prime} _{\mathcal{R}}\ast T(f)\left( z_{
\mathcal{R}},u_{\mathcal{R}}\right)\\
&=&\psi^{\prime}_{\mathcal{R}}\ast K\\
&&\ast \bigg(\sum_{j',k'\in \mathbb{Z}}\sum_{\substack{R'=I'\times J',\\ \ell(I')=2^{-j'-N},\\\ell(J')=2^{-j'-N}+2^{-k'-N} }}|R'| \widetilde{\psi}_{j',k'}((x,y)\circ(x_{I'},y_{J'})^{-1}) (\psi_{j',k'}*h)(x_{I'},y_{J'})\bigg)\left( z_{
\mathcal{R}},u_{\mathcal{R}}\right) \\
&=&\sum_{j',k'\in \mathbb{Z}}\sum_{\substack{R'=I'\times J',\\ \ell(I')=2^{-j'-N},\\\ell(J')=2^{-j'-N}+2^{-k'-N} }}|R'|
\psi^{\prime}_{\mathcal{R}}\ast K
\ast \widetilde{\psi}_{j',k'}((z_{
\mathcal{R}},u_{\mathcal{R}})\circ(x_{I'},y_{J'})^{-1}) (\psi_{j',k'}*h)(x_{I'},y_{J'}).
\end{eqnarray*}

Then, by Lemma \ref{lemma orth} to the term $\psi^{\prime}_{\mathcal{R}}\ast K
\ast \widetilde{\psi}_{j',k'}((z_{
\mathcal{R}},u_{\mathcal{R}})\circ(x_{I'},y_{J'})^{-1})$ in the right-hand side of the last equality above, we obtain that
\begin{eqnarray*}
&&|\psi^{\prime}_{\mathcal{R}}\ast T(f)\left( z_{
\mathcal{R}},u_{\mathcal{R}}\right)|\\
&\leq&\sum_{j',k'\in \mathbb{Z}}2^{-|j-j'|\varepsilon}2^{-|k-k'|}\sum_{\substack{R'=I'\times J',\\ \ell(I')=2^{-j'-N},\\\ell(J)=2^{-j'-N}+2^{-k'-N} }}|R'|
\frac{2^{-(j \wedge j')/2}}{(2^{-(j \wedge j')}
+ |z_{\mathcal{R}}-x_{I'}|)^{2n + \frac{1}{2}}}\\
&&\times
 \frac{2^{-(k \wedge k')/4}}{(2^{-k\wedge k'} + |u_{\mathcal{R}}-y_{J'}|)^{1 + \frac{1}{4}}}|(\psi_{j',k'}*h)(x_{I'},y_{J'})|\hskip.5cm \mbox{if  } 2(j \wedge j') \geq k \wedge k',
\end{eqnarray*}
and
\begin{eqnarray*}
&&|\psi^{\prime} _{\mathcal{R}}\ast T(f)\left( z_{
\mathcal{R}},u_{\mathcal{R}}\right)|\\
&\leq&\sum_{j',k'\in \mathbb{Z}}2^{-|j-j'|\varepsilon}2^{-|k-k'|}\sum_{\substack{R'=I'\times J',\\ \ell(I')=2^{-j'-N},\\\ell(J)=2^{-j'-N}+2^{-k'-N} }}|R'|
\frac{2^{-(j \wedge j')/2}}{(2^{-(j \wedge j')}
+ |z_{\mathcal{R}}-x_{I'}|)^{2n + \frac{1}{2}}} \\
&&\times
\frac{2^{-(j \wedge j')/2}}{(2^{-(j \wedge j')}
+ \sqrt{|u_{\mathcal{R}}-y_{J'}|})^{2  + \frac{1}{2}}}|(\psi_{j',k'}*h)(x_{I'},y_{J'})|\hskip.5cm \mbox{if  } 2(j \wedge j') < k \wedge k'.
\end{eqnarray*}

Using Lemma 7 in \cite{HLS, HLS2}, for $ {{4n}\over {4n+1}}<r<p$ and any $\left( z_{
\mathcal{R}}^*,u_{\mathcal{R}}^*\right)\in R$, we get that
\begin{eqnarray*}
&&|\psi^{\prime} _{\mathcal{R}}\ast T(f)\left( z_{
\mathcal{R}},u_{\mathcal{R}}\right)|\\
&\leq& C\sum_{j',k'\in \mathbb{Z}}2^{-|j-j'|\varepsilon}2^{-|k-k'|} 2^{({1\over r}-1)N(2n+1)}
2^{[2n(j\wedge j' -j')+(k\wedge k'-k')](1-{1\over r})}   \\
&&\times \Bigg( \mathcal{M}_s\bigg[\bigg(\sum_{\substack{R'=I'\times J',\\ \ell(I')=2^{-j'-N},\\\ell(J)=2^{-j'-N}+2^{-k'-N} }}|(\psi_{j',k'}*h)(x_{I'},y_{J'})|\chi_{I'}\chi_{J'}\bigg)^r\bigg]\Bigg)^{1\over r}\left( z_{
\mathcal{R}}^*,u_{\mathcal{R}}^*\right)\\
&&+ C\sum_{j',k'\in \mathbb{Z}:\ 2(j \wedge j') < k \wedge k' }2^{-|j-j'|\varepsilon}2^{-|k-k'|} 2^{({1\over r}-1)N(2n+1)}
2^{[2n(j\wedge j' -j')+(j\wedge j'-j'\wedge k')](1-{1\over r})}\\
&&\quad\times \Bigg( \mathcal{M}\bigg[\bigg(\sum_{\substack{R'=I'\times J',\\ \ell(I')=2^{-j'-N},\\\ell(J)=2^{-j'-N}+2^{-k'-N} }}|(\psi_{j',k'}*h)(x_{I'},y_{J'})|\chi_{I'}\chi_{J'}\bigg)^r\bigg]\Bigg)^{1\over r}\left( z_{
\mathcal{R}}^*,u_{\mathcal{R}}^*\right),\\
\end{eqnarray*}
where $\mathcal {M}$ is the Hardy-Littlewood maximal function and $\mathcal{M}_s$ is the strong maximal function on $\mathbb{H}^n,$ respectively.

Applying H\"older's inequality and Fefferman-Stein vector valued maximal inequality and summing over $\mathcal{R}\in
\mathsf{R}_{vert}$ yield
\begin{eqnarray*}
&&\bigg\|\bigg\{\sum_{\mathcal{R}\in
\mathsf{R}_{vert}}\left\vert \psi^{\prime} _{\mathcal{R}}\ast T(f)\left( z_{%
\mathcal{R}},u_{\mathcal{R}}\right) \right\vert ^{2}\chi _{\mathcal{R}%
}\left( z,u\right) \bigg\} ^{{\frac{{1}}{{2}}}}\bigg\|_p\\
&&\leq C\bigg\|\bigg\{\sum_{\mathcal{R}\in
\mathsf{R}_{vert}}\Bigg\vert \sum_{j',k'\in \mathbb{Z} }2^{-|j-j'|\varepsilon}2^{-|k-k'|}2^{[2n(j\wedge j' -j')+(k\wedge k'-k')](1-{1\over r})}\\
&&\hskip.5cm\Bigg( \mathcal{M}_s\bigg[\bigg(\sum_{\substack{R'=I'\times J',\\ \ell(I')=2^{-j'-N},\\\ell(J)=2^{-j'-N}+2^{-k'-N} }}|(\psi_{j',k'}*h)(x_{I'},y_{J'})|\chi_{I'}\chi_{J'}\bigg)^r\bigg]\Bigg)^{1\over r}\left( z_{
\mathcal{R}}^*,u_{\mathcal{R}}^*\right) \Bigg\vert ^{2}\chi _{\mathcal{R}%
}\left( z,u\right) \bigg\} ^{{\frac{{1}}{{2}}}}\bigg\|_p\\
&&\leq C\bigg\|\bigg\{ \sum_{j',k'\in \mathbb{Z} }\sum_{\substack{R'=I'\times J',\\ \ell(I')=2^{-j'-N},\\\ell(J)=2^{-j'-N}+2^{-k'-N} }}|(\psi_{j',k'}*h)(x_{I'},y_{J'})|^{2}\chi_{I'}(\cdot)\chi_{J'}(\cdot)\bigg\}^{{\frac{{1}}{{2}}}}\bigg\|_p\\
&&\leq C\|h\|_{H^p(\mathbb{H}^n)}\\
&&\leq C\|f\|_{H^p(\mathbb{H}^n)}.
\end{eqnarray*}%

The proof for \eqref{one-parameter} is similar and easier. The proof of Theorem \ref{main theorem} is concluded.
\end{proof}

\bigskip
{\bf Acknowledgement:} J. Li is supported by ARC DP 160100153.

\bibliographystyle{amsplain}

\end{document}